\begin{document}
	\setlength{\baselineskip}{16pt}

	\newtheorem{theorem}{Theorem}[section]
	\newtheorem{lemma}{Lemma}[section]
	\newtheorem{proposition}{Proposition}[section]
	\newtheorem{definition}{Definition}[section]
	\newtheorem{example}{Example}[section]
	\newtheorem{corollary}{Corollary}[section]
	\newtheorem{assumption}{Assumption}[section]
	\newtheorem{remark}{Remark}[section]
	\numberwithin{equation}{section}
	\renewcommand{\labelenumi}{(\arabic{enumi})}

	\def\disp{\displaystyle}
	\def\undertex#1{$\underline{\hbox{#1}}$}
	\def\card{\mathop{\hbox{card}}}
	\def\sgn{\mathop{\hbox{sgn}}}
	\def\exp{\mathop{\hbox{exp}}}
	\def\OFP{(\Omega,{\cal F},\PP)}
	\newcommand\JM{Mierczy\'nski}
	\newcommand\RR{\ensuremath{\mathbb{R}}}
	\newcommand\EE{\ensuremath{\mathbb{E}}}
	\newcommand\CC{\ensuremath{\mathbb{C}}}
	\newcommand\QQ{\ensuremath{\mathbb{Q}}}
	\newcommand\ZZ{\ensuremath{\mathbb{Z}}}
	\newcommand\NN{\ensuremath{\mathbb{N}}}
	\newcommand\PP{\ensuremath{\mathbb{P}}}
	\newcommand\abs[1]{\ensuremath{\lvert#1\rvert}}
	\newcommand\normf[1]{\ensuremath{\lVert#1\rVert_{f}}}
	\newcommand\normfRb[1]{\ensuremath{\lVert#1\rVert_{f,R_b}}}
	\newcommand\normfRbone[1]{\ensuremath{\lVert#1\rVert_{f, R_{b_1}}}}
	\newcommand\normfRbtwo[1]{\ensuremath{\lVert#1\rVert_{f,R_{b_2}}}}
	\newcommand\normtwo[1]{\ensuremath{\lVert#1\rVert_{2}}}
	\newcommand\norminfty[1]{\ensuremath{\lVert#1\rVert_{\infty}}}
	
	\newcommand\del[1]{}
	
	\begin{frontmatter}

\title{Approximation of invariant measures for random lattice reversible Selkov systems\tnoteref{Supported}}
		\tnotetext[Supported]{The research is supported by National Natural Science Foundation of China (12301020, 12371198), Scientific Research
Program Funds of NUDT (No. 22-ZZCX-016).}

\author{Fang Su}
		\author{Xue Wang\corref{cor1}}
		\author{Xia Pan}

		
		\cortext[cor1]{Corresponding author, wangxue22@nudt.edu.cn}
		
		\address{College of Sciences, National University of Defense Technology, Changsha Hunan, 410073, P.R.China}

\begin{abstract}
This paper focuses on the numerical approximation of random lattice reversible Selkov systems. It establishes the existence of numerical invariant measures for random models with nonlinear noise, using the backward Euler-Maruyama (BEM) scheme for time discretization. The study examines both infinite dimensional discrete random models and their corresponding finite dimensional truncations. A classical path convergence technique is employed to demonstrate the convergence of the invariant measures of the BEM scheme to those of the random lattice reversible Selkov systems. As the discrete time step size approaches zero, the invariant measure of the random lattice reversible Selkov systems can be approximated by the numerical invariant measure of the finite dimensional truncated systems.

\end{abstract}
		
\begin{keyword}
Selkov system \sep Numerical invariant measures \sep Backward Euler-Maruyama.\\	
		\end{keyword}

\end{frontmatter}

\noindent\textbf{Mathematics Subject Classification} 37L40 $\cdot$ 35B40 $\cdot$  37L60




\section{Introduction}\label{intro}

The spatially discrete representations of partial differential equations are commonly referred to as lattice systems or lattice differential equations. 
For more details on this topic, the reader is directed to Hale's work \cite{Hale-1994}. In recent decades, lattice systems have attracted significant attention from researchers due to their important or potential applications in various fields, including statistical mechanics \cite{Gallavotti-1967}, correlation propagation \cite{Nachtergaele-2006}, image processing \cite{Chua-1993}, pattern recognition \cite{Chow-1995, Chow-1996}, chemical reactions \cite{Kapral-1994}, electrical engineering \cite{Carrol-1990}, and others. Recently, scholars have increasingly recognized the importance of considering the impact of random factors on lattice dynamical systems. When a random partial differential equation is discretized in space, methods and techniques developed for the study of random ordinary differential equations can be effectively adapted to investigate the qualitative behavior of random lattice systems \cite{Arnold-1974, Balenzuela-2014, Boukanjime-2020, Boukanjime-2021, Caraballo-2020, Caraballo-2022, Mao-2006}.

The reversible Selkov system is a significant classical reaction-diffusion equation, commonly used to model autocatalytic biochemical processes. In a broader context, the reversible Selkov model is also known as the two-component Gray-Scott equation. Over the years, several important results have been established regarding the reversible Selkov equations. To date, both deterministic and random forms of these equations have been studied, including random lattice versions. For example, You \cite{You-2012, You-2013} investigated the global attractor and robustness of reversible autocatalytic reaction-diffusion systems. Later, the same author \cite{You-2014} explored random attractors and robustness under stochastic influences. In the case of the random reversible Selkov system driven by multiplicative noise, Guo et al. \cite{Guo-2019} examined the upper semi-continuity of random attractors. The authors in \cite{Gu-2013, Gu-2013-2} introduced pullback and uniform attractors for the non-autonomous three component reversible Gray-Scott system. Furthermore, the existence of random attractors for the random reversible Selkov system on an infinite lattice perturbed by additive noise has been studied in \cite{LiH-2015, LiH-2016, LiH-2019}. More recently, Wang et al. \cite{Wang-2024} analyzed the existence and stability of invariant or periodic probability measures for lattice reversible Selkov systems driven by locally Lipschitz noise. To the best of our knowledge, no study has yet addressed the numerical invariant measure of lattice reversible Selkov systems. Investigating the numerical invariant measure of random reversible Selkov equations is of significant interest, as it is closely related to the convergence analysis of numerical schemes and may provide valuable insights into the upper semicontinuity of numerical invariant measures.

The theory of invariant measures for random differential equations, lattice systems, and partial differential equations has been extensively studied by many authors \cite{Bao-2014, Bessaih-2020, Brzezniak-2016, Brzezniak-2017, Chen-2021, Eckmann-2001, Khasminskii-2012, Wang-2019-0, Wang-2019, Wang-2020, Wangren-2020, Wu-2017} and references therein. In practical applications, understanding the shape of the stationary distribution (or invariant measure) is crucial. For random differential equations, this is often computed by solving the coupled Kolmogorov-Fokker-Planck equations. However, this process is often challenging in practice. Alternatively, numerical schemes can be used to obtain the stationary distribution of random differential equations. When the invariant measure of both the time discrete approximation and the underlying continuous random system is unique, the invariant measures of the discrete approximation will converge to that of the continuous system. For random ordinary differential equations, see \cite{Li-2018, Liu-2015, Shi-2024, Yang-2018, Yuan-2004}, and for random lattice differential systems, see \cite{Caraballo-2024}. For certain complex systems, such as the random 2D Navier-Stokes system, verifying the uniqueness of the invariant measure is challenging (see \cite{Prato-2008, Hairer-2006}). In the absence of uniqueness, the stability of the invariant measures for numerical schemes applied to random differential equations has been recently studied in \cite{Lidingshi-2025, Lidingshi-2026}. In this paper, we investigate the behavior of invariant measures for time discrete approximations in $\ell^2\times \ell^2$. We show that, under certain conditions, any limit point of these invariant measures is an invariant measure of the underlying continuous random lattice system as the step size approaches zero.

In this paper, we consider the following random lattice reversible Selkov system defined on the integer set $mathbb{Z}$:
\begin{equation}\label{1.1}
\left\{\begin{array}{l}
d u_i(t)=\left(d_1\left(u_{i+1}(t)-2 u_i(t)+u_{i-1}(t)\right)-a_1 u_i(t)+b_1 u_i^{2 p}(t) v_i(t)-b_2 u_i^{2 p+1}(t)+f_{i}\right) d t \\
~~~~~~~~~~~~~~~+\left[h_i+\sigma_i(u_i(t))\right] d W(t), \\
d v_i(t)=\left(d_2\left(v_{i+1}(t)-2 v_i(t)+v_{i-1}(t)\right)-a_2 v_i(t)-b_1 u_i^{2 p}(t) v_i(t)+b_2 u_i^{2 p+1}(t)+g_{i}\right) d t \\
~~~~~~~~~~~~~~~+\left[h_i+\sigma_i(v_i(t))\right] d W(t) ,
\end{array}\right.
\end{equation}
with initial conditions
\begin{equation}\label{1.1.0}
u_{i}(\tau)=u_{0, i}, \quad v_{i}(\tau)=v_{0, i},
\end{equation}
where $t>\tau, \tau \in \mathbb{R}, i \in \mathbb{Z}, u=\left(u_{i}\right)_{i \in \mathbb{Z}}, v=\left(v_{i}\right)_{i \in \mathbb{Z}}\in\ell^2, p \geqslant 1$, and $d_1, d_2, a_1, a_2, b_1, b_2$ are positive constants. Additionally, $f_{i}=\left(f_{i}\right)_{i \in \mathbb{Z}}, g_{i}=\left(g_{i}\right)_{i \in \mathbb{Z}} \in\ell^2$ represent the determined external forces. $W$ is a Wiener process defined on a complete filtered probability space $\left(\Omega, \mathcal{F},\left\{\mathcal{F}_t\right\}_{t \in \mathbb{R}}, \mathbb{P}\right)$, and $h=\left(h_{i}\right)_{i \in \mathbb{Z}}, \sigma(\cdot)=\left(\sigma_{i}(\cdot)\right)_{i \in \mathbb{Z}}\in\ell^2$ are the noise intensities.

The purpose of this paper is to demonstrate the existence of numerical invariant measures for the system described by \eqref{1.1}-\eqref{1.1.0}, where the BEM scheme is used to discretize the time direction. We prove that the collection of numerical invariant measures converges upper semicontinuously to the collection of invariant measures for the continuous time random lattice dynamical system as the time step size tends to zero. Furthermore, we consider the finite $N$-dimensional truncations of the BEM scheme to establish the existence of finite dimensional numerical invariant measures and their convergence to the invariant measures of the random lattice dynamical system as $N\rightarrow \infty$. As a result, we obtain upper semicontinuous convergence between the collection of numerical invariant measures for the finite dimensional systems and the collection of invariant measures for the original system \eqref{1.1}-\eqref{1.1.0}. This work is the first to study the numerical approximation of invariant measures for random lattice reversible Selkov systems, building on recent advances in the numerical analysis of dynamical systems.

The paper is organized as follows. In Section 2, we present the necessary hypotheses, apply the BEM scheme to discretize the random lattice system in time, and prove the existence of a unique solution for the BEM scheme. In Section 3, we investigate the existence of the corresponding numerical invariant measures. Section 4 is devoted to proving the convergence of these numerical invariant measures. Finally, in Section 5, we investigate finite dimensional truncations of the discrete random lattice dynamical system and prove that the collection of their numerical invariant measures converges upper semicontinuously to the invariant measures of the original system \eqref{1.1}-\eqref{1.1.0}.

\section{Backward Euler-Maruyama Scheme}\label{sec:2}

Throughout the paper, we will frequently use the following inequalities for all $x,y\in \mathbb{R}$,
\begin{align}\label{2.1}
|x^{r}-y^{r}|\leq C_{r}|x-y||x^{r-1}+y^{r-1}|,~~r\geq1
\end{align}
and
\begin{align}\label{2.2}
b_{1}b_{2}x^{2p+1}y-b_{2}^{2}x^{2p+2}-b_{1}^{2}x^{2p}y^{2}
+b_{2}b_{1}x^{2p+1}y=2b_{1}b_{2}x^{2p+1}y-x^{2p}(b_{2}^{2}x^2 + b_{1}^{2}y^2)\leq 0, ~~p\geq 1.
\end{align}

According to \cite{Gu-2016}, let $\ell^2$
be a Hilbert space of real-valued, square-summable bi-infinite sequences, with the inner product
$$
(u,v)=\sum_{i\in \mathbb{Z}}u_iv_i, \,\,\, \forall \,u=(u_i)_{i\in \mathbb{Z}}, v=(v_i)_{i\in \mathbb{Z}} \in \ell^2,
$$
and the norm $\|u\|=\sqrt{(u, u)}$. That is,
\begin{align*}
\ell^2:=\left\{u=\left(u_i\right)_{i \in \mathbb{Z}}:\|u\|^2=\sum_{i \in \mathbb{Z}}\left|u_i\right|^2<\infty\right\}.
\end{align*}

We introduce the linear operators $A$, $B$ and $B^{*}$ from $\ell^{2}$ to $\ell^{2}$. For any $i\in \mathbb{Z}$ and $u=\left(u_i\right)_{i \in \mathbb{Z}}\in \ell^2$, they are defined as follows:
\begin{align*}
(Au)_{i}=-u_{i-1} + 2u_{i} - u_{i+1}, \quad(B u)_i=u_{i+1}-u_i, \quad\left(B^* u\right)_i:=u_{i-1}-u_i.
\end{align*}
We then have $A=BB^{*}=B^{*}B$, and $(B^{*}u,v)=(u,Bv)$ for all $u,v\in \ell^{2}$. According to \cite{Bates-2006}, all operators are bounded on $\ell^2$ with $\|A\|\leq 4,~\|B\|=\|B^*\|\leq 2$, and $\left(A u, u\right)\geq 0$ for all $u\in \ell^2$. 

Define two operators $F:\ell^{2}\times \ell^{2}\rightarrow \ell^{2}$ and $G:\ell^{2}\times \ell^{2}\rightarrow \ell^{2}$ by $F(u,v)=(u_{i}^{2p}v_{i})_{i\in\mathbb{Z}}$ and $G(u)=(u_{i}^{2p+1})_{i\in\mathbb{Z}}$ for any $u=\{u_{i}\}_{i\in \mathbb{Z}},~v=\{v_{i}\}_{i\in \mathbb{Z}}\in \ell^{2}$. For convenience,  the phase space of the entire paper is denoted by $X=\ell^{2}\times\ell^{2}$.

By \eqref{2.1} and Young's inequality, for any $p \geqslant 1$ and $u_1, v_1, u_2, v_2 \in \ell^2$, it is easy to verify that there exists a constant $C>0$ such that
\begin{equation}\label{2.3}
\left\|F\left(u_1, v_1\right)-F\left(u_2, v_2\right)\right\|^2 \leq C\left(\left\|u_1\right\|_{4 p}^{4 p}+\left\|u_2\right\|_{4 p}^{4 p}+\left\|v_2\right\|_{4 p}^{4 p}\right)\left(\left\|u_1-u_2\right\|^2+\left\|v_1-v_2\right\|^2\right) .
\end{equation}
According to \eqref{2.3}, we conclude that $F(u, v)$ satisfies certain locally Lipschitz conditions. Specifically, for every $n \in \mathbb{N}$, there exists $c_1(n)>0$ such that for any $u_1, u_2, v_1, v_2 \in \ell^2$ with $\left\|u_1\right\| \leqslant n,\left\|u_2\right\| \leqslant n,\left\|v_1\right\| \leqslant n$, and $\left\|v_2\right\| \leqslant n$, the following inequalities hold:
\begin{equation}\label{2.4}
\begin{aligned}
& \left\|F\left(u_1, v_1\right)-F\left(u_2, v_2\right)\right\|^2 \leq c_1(n)\left(\left\|u_1-u_2\right\|^2+\left\|v_1-v_2\right\|^2\right), \\
& \left|\left(F\left(u_1, v_1\right)-F\left(u_2, v_2\right), u_1-u_2\right)\right| \leq c_1(n)\left(\left\|u_1-u_2\right\|^2+\left\|v_1-v_2\right\|^2\right),\\
& \left|\left(F\left(u_1, v_1\right)-F\left(u_2, v_2\right), v_1-v_2\right)\right| \leq c_1(n)\left(\left\|u_1-u_2\right\|^2+\left\|v_1-v_2\right\|^2\right).
\end{aligned}
\end{equation}
In a similar manner, by \eqref{2.1}, we can also verify that $G(u)$ satisfies locally Lipschitz conditions. Specifically, for every $n\in \mathbb{N}$, there exists $c_2(n)>0$ such that for any $u, v, u_1, u_2, v_1, v_2 \in \ell^2$ with $\|u\| \leqslant n,\left\|u_1\right\| \leqslant n,\left\|u_2\right\| \leqslant n,\|v\| \leqslant n,\left\|v_1\right\| \leqslant n$, and $\left\|v_2\right\| \leqslant n$, the following inequalities hold:
\begin{equation}\label{2.5}
\begin{aligned}
& \|G(u)-G(v)\|^2 \leq c_2(n)\|u-v\|^2, \\
& |(G(u_1)-G(u_2), u_1-u_2)| \leq c_2(n)\|u_1-u_2\|^2,\\
& |(G(u_1)-G(u_2), v_1-v_2)| \leq c_2(n)(\|u_1-u_2\|^2+\|v_1-v_2\|^2).
\end{aligned}
\end{equation}

The nonlinear diffusion term $\sigma_{i}=\sigma_{i}(\cdot)$ 
satisfies the Lipschitz continuity condition for any $i\in \mathbb{Z}$
\begin{equation}\label{5-10}
|\sigma_i(s_1) - \sigma_i(s_2)| \leq L_{\sigma} |s_1 - s_2|.
\end{equation}
where $L_{\sigma}>0$ is a constant. Another $\sigma_{i}(\cdot)$ satisfies the linear growth condition for any $i\in \mathbb{Z}$
\begin{equation}\label{5-11}
|\sigma_i(s)| \leq \delta_{i} + \beta|s|,~~~~~~\forall s\in \mathbb{R}.
\end{equation}
Let $\sigma(u)=\left(\sigma_i(u_i) \right)_{i\in \mathbb{Z}}, \delta=\left(\delta_i\right)_{i\in \mathbb{Z}}\in \ell^2$. From \eqref{5-10} and \eqref{5-11}, for any $u,v\in \ell^2$, we have
\begin{equation}\label{5-12}
 \|\sigma(u)\|^2 \leq 2\|\delta\|^2 + 2\beta^{2}\|u\|^2.
\end{equation}
\begin{equation}\label{5-13}
\|\sigma(u) - \sigma(v)\|^2 \leq L_{\sigma}\|u - v\|^2.
\end{equation}

For convenience, we use an abstract system to express the Selkov lattice system \eqref{1.1}-\eqref{1.1.0} in $X$: 
\begin{equation}\label{equ:1}
\left\{\begin{array}{l}
d\psi(t) = (Y(\psi(t)) + M(\psi(t)))dt +  R(\psi(t))dW(t),~~t\geq \tau\in \mathbb{R}, \\
\psi(\tau)=\psi_0=\left(u_0, v_0\right)^{T},
\end{array}\right.
\end{equation}
where $\psi(t)=(u(t), v(t))^T$ as well as $Y(\psi(t))$, $M(\psi(t))$ and $R(\psi(t))$ are defined as
 \begin{equation*}\label{equ:2}
\begin{gathered}
Y(\psi(t))=\left(\begin{array}{c}
-d_1 A u-a_1 u \\
-d_2 A v-a_2 v
\end{array}\right), \quad M(\psi(t))=\left(\begin{array}{c}
b_1 F(u(t), v(t))-b_2 G(u(t)) + f \\
-b_1 F(u(t), v(t))+b_2 G(u(t)) + g
\end{array}\right),\\
R(\psi(t)) = \begin{pmatrix}  h(t) + \sigma(u(t)) \\  h(t) +  \sigma(v(t)) \end{pmatrix}.
\end{gathered}
\end{equation*}

The existence and uniqueness of global-in-time solutions, as well as the existence of the invariant measure for \eqref{equ:1}, are provided in \cite{Wang-2024}. These details are not restated here.

We now give the BEM scheme for the random lattice system \eqref{equ:1}:
\begin{equation}\label{2-1.1}
\left\{\begin{array}{l}
         u_{m+1}^{\Delta} = u_{m}^{\Delta} - d_{1}Au_{m+1}^{\Delta}\Delta - a_{1}u_{m+1}^{\Delta}\Delta + b_{1}F(u_{m+1}^{\Delta},v_{m+1}^{\Delta})\Delta
         -b_{2}G(u_{m+1}^{\Delta})\Delta + f\Delta + \left[h+\sigma( u_{m}^{\Delta})\right]\Delta W_{m},
         \vspace{2.0ex}\\
         v_{m+1}^{\Delta} = v_{m}^{\Delta} - d_{2}Av_{m+1}^{\Delta}\Delta - a_{2}v_{m+1}^{\Delta}\Delta - b_{1}F(u_{m+1}^{\Delta},v_{m+1}^{\Delta})\Delta
         +b_{2}G(u_{m+1}^{\Delta})\Delta + g\Delta + \left[h+\sigma( v_{m}^{\Delta})\right]\Delta W_{m},
 \end{array}\right.
\end{equation}
with initial conditions
\begin{equation*}
	u_{i}(\tau)=u_{0, i}, \quad v_{i}(\tau)=v_{0, i},
\end{equation*}
where $\Delta W_{m} = W((m+1)\Delta) - W(m\Delta)$, $m\in \mathbb{N}_0, \mathbb{N}_0=0\bigcup \mathbb{N}$, and $\Delta > 0$ is the time step.

The existence and uniqueness of the solution for the general random lattice system are established in \cite{Zeidler-1985}. Building on this, we prove the existence and uniqueness of the solution to the BEM scheme \eqref{2-1.1}. We now transform \eqref{2-1.1} into the following form:
\begin{equation}\label{2-1.1-1}
\left\{\begin{array}{l}
         u_{m+1}^{\Delta} + d_{1}Au_{m+1}^{\Delta}\Delta + a_{1}u_{m+1}^{\Delta}\Delta - b_{1}F(u_{m+1}^{\Delta},v_{m+1}^{\Delta})\Delta
         + b_{2}G(u_{m+1}^{\Delta})\Delta  = u_{m}^{\Delta} + f\Delta + \left[h+\sigma( u_{m}^{\Delta})\right]\Delta W_{m},
         \vspace{2.0ex}\\
         v_{m+1}^{\Delta} + d_{2}Av_{m+1}^{\Delta}\Delta + a_{2}v_{m+1}^{\Delta}\Delta + b_{1}F(u_{m+1}^{\Delta},v_{m+1}^{\Delta})\Delta
         - b_{2}G(u_{m+1}^{\Delta})\Delta = v_{m}^{\Delta} + g\Delta + \left[h+\sigma( v_{m}^{\Delta})\right]\Delta W_{m},
 \end{array}\right.
\end{equation}
During the iteration process, the right-hand side is known and denoted by
$D = (D_1, D_2)^{T} \in X$, where
\begin{equation*}
D=\binom{D_1}{D_2}=\binom{u_{m}^{\Delta} + f\Delta + \left[h+\sigma( u_{m}^{\Delta})\right]\Delta W_{m}}{v_{m}^{\Delta} + g\Delta + \left[h+\sigma( v_{m}^{\Delta})\right]\Delta W_{m}}
\end{equation*}
Define the operator $G: X \to X$ as 
\begin{equation*}
G(u,v)=\binom{G_1(u,v)}{G_2(u,v)}=\binom{u + d_{1}Au\Delta + a_{1}u\Delta - b_{1}F(u,v)\Delta + b_{2}G(u)\Delta}{v + d_{2}Av\Delta + a_{2}v\Delta + b_{1}F(u,v)\Delta - b_{2}G(u)\Delta}
\end{equation*}
The equation \eqref{2-1.1-1} is rewritten as
\begin{equation}\label{2-1.1-2}
G(u_{m+1}^{\Delta}, v_{m+1}^{\Delta}) = D
\end{equation}
Let $\psi = (u, v)^T$ (with $u, v \in \ell^2$), and define a weighted inner product space on $X$ by
\begin{equation*}
\langle \psi_1, \psi_2 \rangle = b_2 (u_1, u_2) + b_1 (v_1, v_2)
\end{equation*}
where $b_1, b_2 > 0$, and $(\cdot, \cdot)$ denotes the inner product in $\ell^2$, with the norm given by
\begin{equation*}
\|\psi\|^2_X = \langle \psi, \psi \rangle =  b_2 \|u\|^2 + b_1 \|v\|^2
\end{equation*}

\begin{lemma}\label{lemma2.2} Suppose \eqref{5-10} and \eqref{5-11} hold. When the time step $\Delta>0$ is small enough, there exists a unique solution $\psi\in X$ satisfying \eqref{2-1.1-2}.
\end{lemma}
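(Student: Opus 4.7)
The plan is to reformulate \eqref{2-1.1-2} as a monotone operator equation on $X$ endowed with the weighted inner product $\langle\cdot,\cdot\rangle$, and to apply a surjectivity result of Browder--Minty type of the kind available in \cite{Zeidler-1985}. The weights $b_2$ and $b_1$ in this inner product are chosen precisely so that the pointwise identity underlying \eqref{2.2} produces coercivity of $G$.

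First I verify that $G:X\to X$ is continuous, which is immediate from boundedness of $A$ on $\ell^2$ and the local Lipschitz estimates \eqref{2.3}--\eqref{2.5}. Next I compute $\langle G(\psi),\psi\rangle_X = b_2(G_1(u,v),u)+b_1(G_2(u,v),v)$: after grouping, the nonlinear cross terms collapse pointwise to $\Delta\sum_i u_i^{2p}(b_2u_i-b_1v_i)^2\geq 0$, which is exactly the algebraic rearrangement underlying \eqref{2.2}, while the identity, diffusion and damping contributions are all nonnegative. I therefore obtain the coercivity estimate $\langle G(\psi),\psi\rangle_X \geq b_2\|u\|^2+b_1\|v\|^2=\|\psi\|_X^2$, which via Cauchy--Schwarz applied to $\langle G(\psi),\psi\rangle_X = \langle D,\psi\rangle$ yields the a priori bound $\|\psi\|_X \leq \|D\|_X$ for any solution $\psi$.

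For local strong monotonicity I repeat the computation at the difference $G(\psi_1)-G(\psi_2)$. The same rearrangement produces a nonlinear contribution equal to $\Delta\bigl(u_1^{2p}w_1-u_2^{2p}w_2,\,w_1-w_2\bigr)$ with $w_k=b_2u_k-b_1v_k$. Splitting this as $\sum_i u_{1,i}^{2p}(w_{1,i}-w_{2,i})^2 + \sum_i(u_{1,i}^{2p}-u_{2,i}^{2p})w_{2,i}(w_{1,i}-w_{2,i})$, the first sum is nonnegative, and the second is bounded in absolute value by $C(R)\|\psi_1-\psi_2\|_X^2$ on the ball $B_R\subset X$, using \eqref{2.1} together with the embedding $\ell^2\hookrightarrow\ell^\infty$ to absorb the factor $|u_1|^{2p-1}+|u_2|^{2p-1}$. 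Combined with the identity-part contribution $\|\psi_1-\psi_2\|_X^2$, this delivers $\langle G(\psi_1)-G(\psi_2),\psi_1-\psi_2\rangle_X \geq (1-C(R)\Delta)\|\psi_1-\psi_2\|_X^2$, i.e.\ strong monotonicity on $B_R$ whenever $\Delta<1/C(R)$.

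Existence and uniqueness then follow in the standard way. Choosing $R$ slightly larger than the a priori bound and $\Delta$ small enough for strong monotonicity on $B_R$, I apply the surjectivity theorem from \cite{Zeidler-1985} to a suitable truncation of $G$ that coincides with $G$ on $B_R$; the a priori bound forces the solution to lie inside $B_R$ and hence solve \eqref{2-1.1-2}. Uniqueness is immediate from local strong monotonicity, since any two solutions are trapped in $B_R$ by the a priori bound. The main obstacle is precisely that $G$ fails to be globally monotone: at the level of a single point the cross-term structure between the $u$- and $v$-equations collapses into a nonnegative square, but at the level of differences it only gives a sign-indefinite term that has to be absorbed into the positive linear part at the price of a smallness condition on $\Delta$ calibrated to $R$, hence pathwise to $\|D\|_X$ and to the previous iterate.
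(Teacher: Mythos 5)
Your functional-analytic framework is the same as the paper's---rewrite \eqref{2-1.1-2} as an operator equation for $G$ on $X$ with the weighted inner product and invoke the monotone-operator surjectivity theorem of \cite{Zeidler-1985}---but the decisive step differs. The paper claims that $G$ is strictly monotone on all of $X$ once $\Delta$ is small enough and applies Theorem 26.A directly: no ball, no truncation, and a smallness condition on $\Delta$ that does not involve the right-hand side $D$. You instead prove coercivity (your pointwise identity $\Delta\sum_i u_i^{2p}(b_2u_i-b_1v_i)^2\geq 0$ is exactly the content of \eqref{2.2} and coincides with the paper's coercivity computation), deduce the a priori bound $\|\psi\|_X\leq\|D\|_X$, establish strong monotonicity only on a ball $B_R$ for $\Delta<1/C(R)$ via \eqref{2.1} and $\ell^2\hookrightarrow\ell^\infty$, and then pass through a truncated operator. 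Those computations are correct, and your diagnosis is sound: the difference term $\Delta\bigl(u_1^{2p}w_1-u_2^{2p}w_2,\,w_1-w_2\bigr)$ with $w_k=b_2u_k-b_1v_k$ is only locally absorbable into the identity part, so the paper's one-line assertion of strict monotonicity ``for $\Delta$ small enough'' is precisely the delicate point, and it is not justified by the estimate displayed there.

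The price of your route, however, is a genuine mismatch with the lemma as stated and as used afterwards. Your admissible range $\Delta<1/C(R)$ with $R$ slightly larger than $\|D\|_X$ is pathwise and step-dependent: $D$ contains $\Delta W_m$ (an unbounded Gaussian increment) and the previous iterate, so for any fixed $\Delta>0$ the condition fails on an event of positive probability at every step, and on that event your argument yields neither existence inside $B_R$ nor uniqueness. Lemma \ref{lemma2.2}---and everything built on it (the moment bounds of Lemma \ref{lemma2.4}, the Markov/Feller structure, the invariant measures for a fixed $\Delta\in(0,\Delta^*)$)---requires solvability of \eqref{2-1.1-2} for a deterministic threshold $\Delta^*$, uniformly in $m$ and almost surely in $\omega$. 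So, as written, your proposal proves a weaker, data-dependent version of the statement; to recover the paper's conclusion you would have to either upgrade your argument to a $D$-uniform one (which your own local analysis indicates monotonicity alone will not give) or accept the paper's global strict monotonicity claim, which you correctly decline to do. In short, your argument is internally coherent but does not deliver the lemma in the uniform form the paper states and uses, and the divergence exposes the unproved step in the paper's own proof rather than an algebraic error in yours.
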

\begin{proof}
Since
\begin{equation*}
\begin{aligned}
&\langle (u_1-u_2, v_1-v_2)^T, G(u_1,v_1) - G(u_2,v_2) \rangle \\
= &b_2 \|u\|^2 + b_1 \|v\|^2 + b_2\Delta \left( u_1-u_2, d_1 A (u_1-u_2)  + a_1 (u_1-u_2)  - b_1 (u_1^{2p} v_1 - u_2^{2p} v_2)  + b_2 (u_1^{2p+1} - u_2^{2p+1}) \right)\\
& + b_1\Delta \left(v_1-v_2,  d_2 A (v_1-v_2)  + a_2 (v_1-v_2)  + b_1 (u_1^{2p} v_1 - u_2^{2p} v_2)  - b_2 (u_1^{2p+1} - u_2^{2p+1}) \right).
\end{aligned}
\end{equation*}
Since $\left( A (u_1-u_2),(u_1-u_2)\right) \geq 0$, $\left(A (v_1-v_2), (v_1-v_2)\right) \geq 0$, and $b_1 > 0$, $b_2 > 0$, $a_1 > 0$, $a_2 > 0$, and $\Delta > 0$ is small enough, we have
\begin{equation*}
\langle (u_1-u_2, v_1-v_2)^T, G(u_1,v_1) - G(u_2,v_2) \rangle  > 0.
\end{equation*}
Thus, $G$ is strictly monotonic.

Next, we prove that the operator $G$ is coercive. Since
\begin{equation*}
\begin{aligned}
\langle (u,v)^T, G(u,v) \rangle &= b_2 \|u\|^2 + b_1 \|v\|^2 + b_2 d_1 \Delta (u, A u) + b_1 d_2 \Delta (v, A v) + b_2 a_1 \Delta \|u\|^2 + b_1 a_2 \Delta \|v\|^2 \\
&\quad + \Delta \left[ - b_1b_2 (u, u^{2p} v) + b_2^2  (u, u^{2p+1}) + b_1^2 (v, u^{2p} v) - b_1b_2  (v, u^{2p+1}) \right],
\end{aligned}
\end{equation*}
and because $(A u, u) \geq 0$, $(A v, v) \geq 0$, by \eqref{2.2}, we have
\begin{align*}
\langle (u,v)^T, G(u,v)\rangle \geq (1+\lambda\Delta)\left(b_2 \|u\|^2 + b_1 \|v\|^2\right),
\end{align*}
where $\lambda=a_1\bigwedge a_2$. When $\| \psi \|_X = \sqrt{b_2 \|u\|^2 + b_1 \|v\|^2} \rightarrow \infty$, it follows that
\begin{align*}
		\lim_{\| \psi \|_X\rightarrow\infty}\frac{\langle (u,v)^T, G(u,v)\rangle}{\| \psi \|_X} = \infty.
\end{align*}
Thus, the operator $G$ is coercive. By Theorem 26.A in \cite{Zeidler-1985}, there exists a unique solution $\psi\in X$ satisfying \eqref{2-1.1-2}.
\end{proof}

\section{Existence of the numerical invariant measure}

In the following, the existence of an invariant measure for the BEM scheme \eqref{2-1.1} will be considered. Let 
\begin{equation}\label{5-29}
\lambda > 16\beta^{2}.
\end{equation}

\begin{lemma}\label{lemma2.4} Suppose \eqref{5-10}, \eqref{5-11} and \eqref{5-29} hold. Then, for any $0<\Delta<\Delta^*$, the solution of \eqref{2-1.1} satisfies
\begin{equation}
\mathbb{E}\left(\|\psi_{m}^\Delta\|_X^2\right) \leq \|\psi_0\|_X^2 e^{mln\left(1-\frac{\lambda}{4}\Delta\right)} + M, 
\end{equation}
where $M>0$ is independent of $\psi_0$ and $\Delta$.
\end{lemma}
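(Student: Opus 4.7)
The plan is to derive a one-step contractive recursion of the form $\mathbb{E}\|\psi_{m+1}^\Delta\|_X^2 \leq (1-\tfrac{\lambda}{4}\Delta)\mathbb{E}\|\psi_{m}^\Delta\|_X^2 + C\Delta$ and then iterate. To do this, I will test the two equations of the BEM scheme \eqref{2-1.1} against $2b_2 u_{m+1}^\Delta$ and $2b_1 v_{m+1}^\Delta$ respectively and add them. The polarization identity $2(u_{m+1}^\Delta-u_m^\Delta,u_{m+1}^\Delta) = \|u_{m+1}^\Delta\|^2 - \|u_m^\Delta\|^2 + \|u_{m+1}^\Delta-u_m^\Delta\|^2$ (and its analogue for $v$) is the main reason the BEM scheme is advantageous here: the ``extra'' positive term $\|u_{m+1}^\Delta-u_m^\Delta\|^2 + \|v_{m+1}^\Delta-v_m^\Delta\|^2$ will be used to absorb the Itô-type contribution from the noise.

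Next I would discard the (non-positive) discrete Laplacian terms via $(Au,u)\ge 0$ and $(Av,v)\ge 0$. The cross nonlinearities arrange exactly into the left-hand side of the algebraic inequality \eqref{2.2}, so the net nonlinear contribution is $\le 0$. This leaves
\begin{equation*}
\|\psi_{m+1}^\Delta\|_X^2 + b_2\|u_{m+1}^\Delta-u_m^\Delta\|^2 + b_1\|v_{m+1}^\Delta-v_m^\Delta\|^2 + 2\lambda\Delta \|\psi_{m+1}^\Delta\|_X^2 \leq \|\psi_m^\Delta\|_X^2 + \mathcal{I}_f + \mathcal{I}_W,
\end{equation*}
where $\mathcal{I}_f = 2b_2\Delta(f,u_{m+1}^\Delta) + 2b_1\Delta(g,v_{m+1}^\Delta)$ and $\mathcal{I}_W$ collects the two noise inner products. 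I would handle $\mathcal{I}_f$ with Young's inequality to absorb half of the $2\lambda\Delta\|\psi_{m+1}^\Delta\|_X^2$ term, leaving a $\lambda\Delta\|\psi_{m+1}^\Delta\|_X^2$ dissipation and a deterministic constant $C_0\Delta$.

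The delicate step—what I expect to be the main obstacle—is the stochastic term $\mathcal{I}_W$, because the implicit scheme makes $u_{m+1}^\Delta$ and $v_{m+1}^\Delta$ depend on $\Delta W_m$, so $\mathbb{E}(u_{m+1}^\Delta,[h+\sigma(u_m^\Delta)]\Delta W_m)$ is not zero a priori. I split
\begin{equation*}
(u_{m+1}^\Delta,[h+\sigma(u_m^\Delta)]\Delta W_m) = (u_m^\Delta,[h+\sigma(u_m^\Delta)]\Delta W_m) + (u_{m+1}^\Delta-u_m^\Delta,[h+\sigma(u_m^\Delta)]\Delta W_m).
\end{equation*}
The first summand is $\mathcal{F}_{m\Delta}$-measurable times a mean-zero independent increment, so its expectation vanishes. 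The second is bounded by Young's inequality as $\tfrac{1}{2}\|u_{m+1}^\Delta-u_m^\Delta\|^2 + \tfrac{1}{2}\|h+\sigma(u_m^\Delta)\|^2|\Delta W_m|^2$; the first piece is absorbed by the corresponding increment term on the left, and after taking expectations $\mathbb{E}|\Delta W_m|^2 = \Delta$ together with the linear growth bound \eqref{5-12} produce a term $\le 4\beta^2\Delta\,\mathbb{E}\|\psi_m^\Delta\|_X^2 + C_1\Delta$ (after handling $v$ identically).

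Assembling these estimates yields $(1+\lambda\Delta)\mathbb{E}\|\psi_{m+1}^\Delta\|_X^2 \leq (1+4\beta^2\Delta)\mathbb{E}\|\psi_m^\Delta\|_X^2 + C\Delta$. Using $\lambda > 16\beta^2$ from \eqref{5-29}, for all sufficiently small $\Delta < \Delta^*$ one checks algebraically that $\tfrac{1+4\beta^2\Delta}{1+\lambda\Delta} \leq 1 - \tfrac{\lambda}{4}\Delta$, whence
\begin{equation*}
\mathbb{E}\|\psi_{m+1}^\Delta\|_X^2 \leq \bigl(1-\tfrac{\lambda}{4}\Delta\bigr)\mathbb{E}\|\psi_m^\Delta\|_X^2 + \tfrac{C\Delta}{1+\lambda\Delta}.
\end{equation*}
Iterating gives $\mathbb{E}\|\psi_m^\Delta\|_X^2 \leq (1-\tfrac{\lambda}{4}\Delta)^m\|\psi_0\|_X^2 + \tfrac{C}{\lambda/4}$, and rewriting $(1-\tfrac{\lambda}{4}\Delta)^m = e^{m\ln(1-\frac{\lambda}{4}\Delta)}$ with $M := 4C/\lambda$ produces the claimed bound.
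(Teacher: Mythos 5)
Your proposal is correct and follows essentially the same route as the paper: test the implicit scheme against $(b_2u_{m+1}^\Delta,b_1v_{m+1}^\Delta)$, kill the nonlinearity via \eqref{2.2} and the Laplacian via $(Au,u)\ge 0$, use the zero mean of the $\mathcal{F}_{m\Delta}$-measurable-times-increment term together with \eqref{5-12} to reach $(1+\lambda\Delta)\mathbb{E}\|\psi_{m+1}^\Delta\|_X^2\le(1+4\beta^2\Delta)\mathbb{E}\|\psi_m^\Delta\|_X^2+C\Delta$, and close with $\lambda>16\beta^2$. The only difference is bookkeeping: you use the polarization identity and absorb the noise cross term into the increment $\|u_{m+1}^\Delta-u_m^\Delta\|^2$, while the paper applies Young's inequality directly to $(u_m^\Delta+[h+\sigma(u_m^\Delta)]\Delta W_m,\,u_{m+1}^\Delta)$ and expands the square — the two are algebraically equivalent.
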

\begin{proof} 
Taking the inner product of \eqref{2-1.1} with $\left(b_2u_{m+1}^{\Delta}, b_1v_{m+1}^\Delta\right)$ in $X$, we get
\begin{equation}\label{2-1.2}
\left\{\begin{array}{l}
b_{2}\|u_{m+1}^{\Delta}\|^2 = - b_{2}d_{1}\Delta\|Bu_{m+1}^{\Delta}\|^2 - b_{2}a_{1}\Delta\|u_{m+1}^{\Delta}\|^2 + b_{2}b_{1}\Delta(F(u_{m+1}^{\Delta},v_{m+1}^{\Delta}),u_{m+1}^{\Delta})\vspace{1.0ex}\\
       \hspace{12.0ex} -b_{2}^2\Delta(G(u_{m+1}^{\Delta}),u_{m+1}^{\Delta}) + b_{2}\Delta(f,u_{m+1}^{\Delta}) + b_{2}\left(u_{m}^{\Delta} +\left[h+\sigma( u_{m}^{\Delta})\right]\Delta W_{m},u_{m+1}^{\Delta}\right),\vspace{2.0ex}\\
b_{1}\|v_{m+1}^{\Delta}\|^2 = - b_{1}d_{2}\Delta\|Bv_{m+1}^{\Delta}\|^2 - b_{1}a_{2}\Delta\|v_{m+1}^{\Delta}\|^2 - b_{1}^2\Delta(F(u_{m+1}^{\Delta},v_{m+1}^{\Delta}),v_{m+1}^{\Delta})\vspace{1.0ex}\\
        \hspace{12.0ex} +b_{1}b_{2}\Delta(G(u_{m+1}^{\Delta}),v_{m+1}^{\Delta}) + b_{1}\Delta(g,v_{m+1}^{\Delta}) + b_{1}\left(v_{m}^{\Delta} + \left[h+\sigma( v_{m}^{\Delta})\right]\Delta W_{m},v_{m+1}^{\Delta})\right).        
\end{array}\right.
\end{equation}
By Young's inequality and \eqref{2.2}, we obtain
\begin{equation}
\begin{aligned}
& b_{2}\|u_{m+1}^{\Delta}\|^2 +  b_{1}\|v_{m+1}^{\Delta}\|^2 \\
\leq & - b_{2}a_{1}\Delta\|u_{m+1}^{\Delta}\|^2 + b_{2}\Delta(f,u_{m+1}^{\Delta}) + b_{2}\left(u_{m}^{\Delta}+\left[h+\sigma( u_{m}^{\Delta})\right]\Delta W_{m},u_{m+1}^{\Delta}\right)\\
& - b_{1}a_{2}\Delta\|v_{m+1}^{\Delta}\|^2 + b_{1}\Delta(g,v_{m+1}^{\Delta}) + b_{1}\left(v_{m}^{\Delta}+\left[h+\sigma( v_{m}^{\Delta})\right]\Delta W_{m},v_{m+1}^{\Delta}\right)\\
\leq & (\frac{1}{2}-\frac{\lambda}{2}\Delta)b_{2}\|u_{m+1}^{\Delta}\|^2 + b_{2}\Delta\frac{1}{2\lambda}\|f\|^2 + \frac{1}{2}b_{2}\|u_{m}^{\Delta}\|^2 +\frac{1}{2}b_{2}\|\left(h+\sigma( u_{m}^{\Delta})\right)\Delta W_{m}\|^2 + b_{2}\left(u_{m}^{\Delta}, \left(h+\sigma( u_{m}^{\Delta})\right)\Delta W_{m}\right)\\
&+ (\frac{1}{2}-\frac{\lambda}{2}\Delta)b_{1}\|v_{m+1}^{\Delta}\|^2 + b_{1}\Delta\frac{1}{2\lambda}\|g\|^2 + \frac{1}{2}b_{1}\|v_{m}^{\Delta}\|^2 +\frac{1}{2}b_{1}\|\left(h+\sigma( v_{m}^{\Delta})\right)\Delta W_{m}\|^2 + b_{1}\left(v_{m}^{\Delta}, \left(h+\sigma( v_{m}^{\Delta})\right)\Delta W_{m}\right).
\end{aligned}
\end{equation}
Taking the expectation, we get
\begin{equation}
\begin{aligned}
&\left(1+\lambda\Delta\right)\mathbb{E}\left(b_{2}\|u_{m+1}^{\Delta}\|^2 +  b_{1}\|v_{m+1}^{\Delta}\|^2\right)\\
\leq & \frac{\Delta}{\lambda}\left(b_{2}\|f\|^2 + b_{1}\|g\|^2\right) + \left(b_{2}\|u_{m}^{\Delta}\|^2 + b_{1}\|v_{m}^{\Delta}\|^2\right) + b_{2}\Delta \left( 2\|h\|^2 + 4\|\delta\|^2 + 4\beta^2\|u_{m}^\Delta\|^2\right) \\
& + b_{1}\Delta \left( 2\|h\|^2 + 4\|\delta\|^2 + 4\beta^2\|v_{m}^\Delta\|^2\right)\\
= & \left(1+4\beta^2\Delta\right)\left(b_{2}\|u_{m}^{\Delta}\|^2 + b_{1}\|v_{m}^{\Delta}\|^2\right) + \Delta \left(\frac{b_{2}}{\lambda}\|f\|^2 + \frac{b_{1}}{\lambda}\|g\|^2 + 2b_2\|h\|^2 +2b_1\|h\|^2 +4b_2\|\delta\|^2 +4b_1\|\delta\|^2  \right).
\end{aligned}
\end{equation}
Then 
\begin{equation}\label{5-31}
\begin{aligned}
&\mathbb{E}\left(b_{2}\|u_{m+1}^{\Delta}\|^2 +  b_{1}\|v_{m+1}^{\Delta}\|^2\right) \\
\leq & \frac{1+4\beta^2\Delta}{1+\lambda\Delta}\left(b_{2}\|u_{m}^{\Delta}\|^2 + b_{1}\|v_{m}^{\Delta}\|^2\right) + \frac{\Delta}{1+\lambda\Delta}\left(\frac{b_{2}}{\lambda}\|f\|^2 + \frac{b_{1}}{\lambda}\|g\|^2 + 2b_2\|h\|^2 +2b_1\|h\|^2 +4b_2\|\delta\|^2 +4b_1\|\delta\|^2  \right).
\end{aligned}
\end{equation}
For any $0<\Delta<\frac{1}{4\lambda}$, we have
\begin{equation}\label{5-32}
\frac{1}{1+\lambda\Delta}\leq 1-\frac{\lambda}{2}\Delta.
\end{equation}
Combining \eqref{5-31} and \eqref{5-32}, for any $\beta^2 < \frac{\lambda}{16}$, we obtain
\begin{equation}\label{5-33}
\begin{aligned}
\mathbb{E}\left(b_{2}\|u_{m+1}^{\Delta}\|^2 +  b_{1}\|v_{m+1}^{\Delta}\|^2\right)\leq &\left(1-\frac{\lambda}{4}\Delta\right)\left(b_2\|u_{m}^\Delta\|^2 + b_1\|v_{m}^\Delta\|^2\right) \\
&+ \Delta\left(\frac{b_{2}}{\lambda}\|f\|^2 + \frac{b_{1}}{\lambda}\|g\|^2 + 2b_2\|h\|^2 +2b_1\|h\|^2 +4b_2\|\delta\|^2 +4b_1\|\delta\|^2  \right).
\end{aligned}
\end{equation}
By induction, we get
\begin{equation}\label{5-34}
\begin{aligned}
&\mathbb{E}\left(\|\psi_{m+1}^\Delta\|_X^2\right) = \mathbb{E}\left(b_{2}\|u_{m+1}^{\Delta}\|^2 +  b_{1}\|v_{m+1}^{\Delta}\|^2\right) \\
\leq & \left(1-\frac{\lambda}{4}\Delta\right)^{m+1}\left(b_2\|u_{0}^\Delta\|^2 + b_1\|v_{0}^\Delta\|^2\right) \\
& + \Delta\left(\frac{b_{2}}{\lambda}\|f\|^2 + \frac{b_{1}}{\lambda}\|g\|^2 + 2b_2\|h\|^2 +2b_1\|h\|^2 +4b_2\|\delta\|^2 +4b_1\|\delta\|^2 \right) \sum^m_{l=0}\left(1-\frac{\lambda}{4}\Delta\right)^{l}\\
\leq & \left(1-\frac{\lambda}{4}\Delta\right)^{m+1}\|\psi_0\|_X^2 + \left(\frac{b_{2}}{\lambda}\|f\|^2 + \frac{b_{1}}{\lambda}\|g\|^2 + 2b_2\|h\|^2 +2b_1\|h\|^2 +4b_2\|\delta\|^2 +4b_1\|\delta\|^2 \right) \frac{4}{\lambda}. 
\end{aligned}
\end{equation}
That is, 
\begin{equation}
\mathbb{E}\left(\|\psi_{m}^\Delta\|_X^2\right) \leq \|\psi_0\|_X^2 e^{mln\left(1-\frac{\lambda}{4}\Delta\right)} + M. 
\end{equation}
\end{proof}

Next, we establish the uniform bounds for the tail ends of the solutions.
\begin{lemma}\label{lemma2.5} Suppose \eqref{5-10}, \eqref{5-11} and \eqref{5-29} hold. Then for any $\eta>0$ and a bounded set $B\subseteq X$, there exists an integer $I=I(\eta,B)\in\mathbb N$, independent of $\Delta$, such that for $0<\Delta<\Delta^*$ and $\psi_{0}\in B$, the solution of \eqref{2-1.1} satisfies
\begin{equation}
\mathbb{E}\left(\sum_{|i|> I}(b_{2}|u_{m,i}^{\Delta}|^{2}+b_{1}|v_{m,i}^{\Delta}|^{2})\right)
\leq \left(\sum_{|i|> I}(b_{2}|u_{0,i}^{\Delta}|^{2}+b_{1}|v_{0,i}^{\Delta}|^{2})\right) e^{mln\left(1-\frac{\lambda}{4}\Delta\right)} + \eta. 
\end{equation}
\end{lemma}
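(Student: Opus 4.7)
The plan is to adapt the energy estimate from Lemma \ref{lemma2.4} using a spatial cut-off that isolates the tail modes, exploiting the fact that the forcing data $f,g,h,\delta$ all lie in $\ell^2$. Fix a smooth function $\rho:\mathbb{R}_+\to[0,1]$ with $\rho\equiv 0$ on $[0,1]$, $\rho\equiv 1$ on $[2,\infty)$, and $|\rho'|\le c_0$; set $\rho_i^I:=\rho(|i|/I)$ and $\rho^I u:=(\rho_i^I u_i)_{i\in\mathbb{Z}}$. Because $\rho_i^I=1$ for $|i|\ge 2I$ and $\rho_i^I=0$ for $|i|\le I$, the weighted sum $\sum_i\rho_i^I(b_2|u_{m,i}^\Delta|^2+b_1|v_{m,i}^\Delta|^2)$ is sandwiched between tails at indices $2I$ and $I$, so bounding this weighted sum yields the stated tail estimate.

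Then I would take the $\ell^2$ inner product of the two lines of \eqref{2-1.1} against $b_2\rho^I u_{m+1}^\Delta$ and $b_1\rho^I v_{m+1}^\Delta$, obtaining a weighted analogue of \eqref{2-1.2}. Three terms need a new treatment. For the discrete Laplacian, write $A=B^*B$ and use the commutator identity
\begin{equation*}
(B(\rho^I u))_i=\rho_i^I(Bu)_i+(\rho_{i+1}^I-\rho_i^I)u_{i+1},
\end{equation*}
which, together with $|\rho_{i+1}^I-\rho_i^I|\le c_0/I$ and $\|B\|\le 2$, gives $(Au,\rho^I u)\ge -\tfrac{2c_0}{I}\|u\|^2$; the Laplacian thus only produces an error of order $\tfrac{C\Delta}{I}\mathbb{E}\|u_{m+1}^\Delta\|^2$, which is uniformly small thanks to Lemma \ref{lemma2.4}. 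The Selkov nonlinearity cancels pointwise: since $\rho_i^I\ge 0$, the identity \eqref{2.2} gives
\begin{equation*}
b_1b_2(F,\rho^I u)-b_2^2(G,\rho^I u)-b_1^2(F,\rho^I v)+b_1b_2(G,\rho^I v)\le 0,
\end{equation*}
so the combined nonlinear contribution can simply be dropped.

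For the remaining terms I would mimic the proof of Lemma \ref{lemma2.4}. Young's inequality with the weight inside yields $|(f,\rho^I u_{m+1}^\Delta)|\le\tfrac{1}{2\lambda}\sum_i\rho_i^I f_i^2+\tfrac{\lambda}{2}\sum_i\rho_i^I|u_{m+1,i}^\Delta|^2$: the first piece is a tail of $\|f\|^2$ that vanishes as $I\to\infty$, and the second is absorbed by the diagonal $a_1\Delta\ge\lambda\Delta$ factor; the $g$ term is analogous. For the noise, adaptedness of $u_m^\Delta$ to $\mathcal{F}_{m\Delta}$ gives $\mathbb{E}(u_m^\Delta,\rho^I[h+\sigma(u_m^\Delta)]\Delta W_m)=0$, and the It\^{o}-isometry-type square is bounded via \eqref{5-12} and $\rho_i^I\le 1$ by $2\Delta\sum_i\rho_i^I(|h_i|^2+2\delta_i^2)+4\beta^2\Delta\sum_i\rho_i^I|u_{m,i}^\Delta|^2$: the first half is a tail of $h,\delta\in\ell^2$, and the second feeds back into the recursion with coefficient $4\beta^2<\lambda/4$ by \eqref{5-29}, preserving the contraction factor $1-\lambda\Delta/4$.

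Collecting everything and taking expectation produces, exactly as in \eqref{5-33}, a recursion
\begin{equation*}
\mathbb{E}\Big[\sum_i\rho_i^I(b_2|u_{m+1,i}^\Delta|^2+b_1|v_{m+1,i}^\Delta|^2)\Big]\le\Big(1-\tfrac{\lambda}{4}\Delta\Big)\mathbb{E}\Big[\sum_i\rho_i^I(b_2|u_{m,i}^\Delta|^2+b_1|v_{m,i}^\Delta|^2)\Big]+\Delta\,\varepsilon(I,B),
\end{equation*}
with $\varepsilon(I,B)\to 0$ as $I\to\infty$ uniformly in $\psi_0\in B$, by virtue of Lemma \ref{lemma2.4}. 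Iterating as in \eqref{5-34} and summing the geometric series yields the exponentially decaying initial tail plus $\tfrac{4\varepsilon(I,B)}{\lambda}$, which is made smaller than the prescribed $\eta$ by choosing $I$ large. The main obstacle I expect is the bookkeeping required to keep every error term genuinely $\Delta$-independent: in particular the commutator contribution $\tfrac{C\Delta}{I}\mathbb{E}\|u_{m+1}^\Delta\|^2$ must be bounded by the uniform-in-$\Delta$ moment estimate from Lemma \ref{lemma2.4}, which is precisely why that lemma had to be proved first.
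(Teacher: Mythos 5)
Your proposal is correct and follows essentially the same route as the paper's proof: a smooth cutoff supported on the tail indices, a weighted energy identity obtained by pairing \eqref{2-1.1} with $(b_2\rho^I u_{m+1}^\Delta, b_1\rho^I v_{m+1}^\Delta)$, a commutator bound of order $\Delta/I$ for the discrete Laplacian controlled via Lemma \ref{lemma2.4}, the pointwise sign from \eqref{2.2} to discard the nonlinearity, $\ell^2$-tails of $f,g,h,\delta$ for the remaining data, and the same contraction recursion with factor $1-\tfrac{\lambda}{4}\Delta$ under \eqref{5-29}. The only (immaterial) difference is that the paper uses the squared weight $\theta_n^2$ where you use $\rho^I$ directly.
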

\begin{proof}
we now introduce the cutoff function $\theta:\mathbb R^{+}\rightarrow [0,1]$, which is continuously differentiable and satisfies the following conditions:
\begin{equation*}\theta(s)=
\left\{\begin{array}{l}
0,~~0\leq s\leq1,\\
1,~~s\geq 2.
\end{array}\right.
\end{equation*}
Note that $\theta'$ is bounded on $R$, i.e., there exists a constant $c_0$ such that $|\theta'(s)| \leq c_0$ for $s\in R$. 

For any $n\in\mathbb N$, let $\theta_n = \left(\theta\left(\frac{i}{n}\right)\right)_{i\in\mathbb Z}$, and define $\theta_{n}u_m^\Delta = \left(\theta\left(\frac{i}{n}\right)  u_{m,i}^\Delta  \right)_{i\in\mathbb Z}$. From the BEM scheme \eqref{2-1.1}, we have
\begin{equation}\label{5-35}
\left\{\begin{array}{l}
         \theta_{n}u_{m+1}^\Delta = - d_{1}\theta_{n}Au_{m+1}^{\Delta}\Delta - a_{1}\theta_{n}u_{m+1}^{\Delta}\Delta + b_{1}\theta_{n}F(u_{m+1}^{\Delta},v_{m+1}^{\Delta})\Delta
         -b_{2}\theta_{n}G(u_{m+1}^{\Delta})\Delta \vspace{1.0ex}\\
        \hspace{9.0ex} + \theta_{n}f\Delta + \left(\theta_{n}u_{m}^\Delta  + \left[\theta_{n}h+\theta_{n}\sigma( u_{m}^{\Delta})\right]\Delta W_{m}\right),
         \vspace{2.0ex}\\
         \theta_{n}v_{m+1}^{\Delta} = - d_{2}\theta_{n}Av_{m+1}^{\Delta}\Delta - a_{2}\theta_{n}v_{m+1}^{\Delta}\Delta - b_{1}\theta_{n}F(u_{m+1}^{\Delta},v_{m+1}^{\Delta})\Delta
         +b_{2}\theta_{n}G(u_{m+1}^{\Delta})\Delta \vspace{1.0ex}\\
         \hspace{9.0ex} + \theta_{n}g\Delta +  \left(\theta_{n}v_{m}^{\Delta} + \left[\theta_{n}h+\theta_{n}\sigma( v_{m}^{\Delta})\right]\Delta W_{m}\right).
 \end{array}\right.
\end{equation}
Taking the inner product of \eqref{5-35} with $\left(b_2\theta_{n}u_{m+1}^{\Delta}, b_1\theta_{n}v_{m+1}^\Delta\right)$ in $X$, we get
\begin{equation}\label{5-36}
\begin{aligned}
b_{2}\|\theta_{n}u_{m+1}^{\Delta}\|^2 =& \left( b_{2}\theta_{n}u_{m+1}^\Delta,  - d_{1}\theta_{n}Au_{m+1}^{\Delta}\Delta - a_{1}\theta_{n}u_{m+1}^{\Delta}\Delta + b_{1}\theta_{n}F(u_{m+1}^{\Delta},v_{m+1}^{\Delta})\Delta - b_{2}\theta_{n}G(u_{m+1}^{\Delta})\Delta + \theta_{n}f\Delta \right) \vspace{1.0ex}\\
\hspace{9.0ex} & + \left(b_{2}\theta_{n}u_{m+1}^\Delta, \theta_{n}u_{m}^\Delta + \left[\theta_{n}h+\theta_{n}\sigma( u_{m}^{\Delta})\right]\Delta W_{m} \right)\\
=& - b_{2} d_{1}\Delta\left(Bu_{m+1}^\Delta,B(\theta_{n}^2u_{m+1}^{\Delta})\right) - b_2a_{1}\Delta\|\theta_{n}u_{m+1}^{\Delta}\|^2 + b_{2}b_{1}\Delta\left(\theta_{n}u_{m+1}^{\Delta}, \theta_{n}F(u_{m+1}^{\Delta},v_{m+1}^{\Delta})\right) \vspace{1.0ex}\\
\hspace{9.0ex} & - b_{2}^2\Delta\left(\theta_{n}u_{m+1}^{\Delta}, \theta_{n}G(u_{m+1}^{\Delta}) \right) + b_{2}\Delta \left( \theta_{n}u_{m+1}^{\Delta}, \theta_{n}f \right) +  b_{2}\left( \theta_{n}u_{m+1}^\Delta, \theta_{n}u_{m}^\Delta + \left[\theta_{n}h+\theta_{n}\sigma( u_{m}^{\Delta})\right]\Delta W_{m} \right),
\end{aligned}
\end{equation}
\begin{equation}\label{5-36-1}
\begin{aligned}
b_{1}\|\theta_{n}v_{m+1}^{\Delta}\|^2 = &\left( b_{1}\theta_{n}v_{m+1}^\Delta, - d_{2}\theta_{n}Av_{m+1}^{\Delta}\Delta - a_{2}\theta_{n}v_{m+1}^{\Delta}\Delta - b_{1}\theta_{n}F(u_{m+1}^{\Delta},v_{m+1}^{\Delta})\Delta + b_{2}\theta_{n}G(u_{m+1}^{\Delta})\Delta + \theta_{n}g\Delta \right) \vspace{1.0ex}\\
\hspace{9.0ex} & + \left(b_1\theta_{n}v_{m+1}^\Delta, \theta_{n}v_{m}^\Delta + \left[\theta_{n}h+\theta_{n}\sigma( v_{m}^{\Delta})\right]\Delta W_{m} \right)\\
=& - b_{1} d_{2}\Delta\left(Bv_{m+1}^\Delta,B(\theta_{n}^2v_{m+1}^{\Delta})\right) - b_{1}a_{2}\Delta\|\theta_{n}v_{m+1}^{\Delta}\|^2 - b_{1}^2\Delta\left(\theta_{n}v_{m+1}^{\Delta}, \theta_{n}F(u_{m+1}^{\Delta},v_{m+1}^{\Delta})\right) \vspace{1.0ex}\\
\hspace{9.0ex} & + b_{1}b_{2}\Delta\left(\theta_{n}v_{m+1}^{\Delta}, \theta_{n}G(u_{m+1}^{\Delta}) \right) + b_{1}\Delta \left( \theta_{n}v_{m+1}^{\Delta}, \theta_{n}g \right) +  b_{1}\left( \theta_{n}v_{m+1}^\Delta, \theta_{n}v_{m}^\Delta + \left[\theta_{n}h+\theta_{n}\sigma( v_{m}^{\Delta})\right]\Delta W_{m} \right).     
\end{aligned}
\end{equation}
For the first term on the right hand side of \eqref{5-36}, we get
\begin{equation}\label{5-36-1}
\begin{aligned}
& - b_{2}d_{1}\Delta\left(Bu_{m+1}^\Delta,B(\theta_{n}^2u_{m+1}^{\Delta})\right)\\
=& - b_{2}d_{1}\Delta \sum_{i \in \mathbb{Z}}\left[\left(u_{m+1,i+1}^\Delta - u_{m+1,i}^\Delta\right)\left( \theta^2\left(\frac{i+1}{n}\right) u_{m+1,i+1}^\Delta - \theta^2\left(\frac{i}{n}\right) u_{m+1,i}^\Delta\right)\right]\\
=& - b_{2}d_{1}\Delta \sum_{i \in \mathbb{Z}}\left[ \theta^2\left(\frac{i+1}{n}\right) \left(u_{m+1,i+1}^\Delta - u_{m+1,i}^\Delta\right)^2\right] - 
 b_{2}d_{1}\Delta \sum_{i \in \mathbb{Z}}\left[ \left(u_{m+1,i+1}^\Delta - u_{m+1,i}^\Delta\right) \left( \theta^2\left(\frac{i+1}{n}\right) -  \theta^2\left(\frac{i}{n}\right)\right) u_{m+1,i}^\Delta \right]\\
\leq & 2b_{2}d_{1}\Delta \sum_{i \in \mathbb{Z}}|\theta\left(\frac{i+1}{n}\right) -  \theta\left(\frac{i}{n}\right)||u_{m+1,i+1}^\Delta - u_{m+1,i}^\Delta||u_{m+1,i}^\Delta|\\
\leq & \Delta\frac{c(\theta)}{n} \|u_{m+1}^\Delta\|^2.
\end{aligned}
\end{equation}
By Lemma \ref{lemma2.4}, we obtain 
\begin{equation}\label{5-37}
 - b_{2}d_{1}\Delta\mathbb{E}\left(Bu_{m+1}^\Delta,B(\theta_{n}^2u_{m+1}^{\Delta})\right) \leq  \Delta\frac{c_1(\theta)}{n} \left( 1 + \|u_{0}\|^2 \right).
\end{equation}
Similarly, we can easily derive that
\begin{equation}\label{5-37-1}
 - b_{1}d_{2}\Delta\mathbb{E}\left(Bv_{m+1}^\Delta,B(\theta_{n}^2v_{m+1}^{\Delta})\right) \leq  \Delta\frac{c_2(\theta)}{n} \left( 1 + \|v_{0}\|^2 \right).
\end{equation}
By Young's inequality, it follows that
\begin{equation}\label{5-38}
\begin{aligned}
& b_{2}\Delta\left(\theta_n u_{m+1}^\Delta,\theta_{n}f\right) \leq b_2\Delta\frac{\lambda}{2} \|\theta_n u_{m+1}^\Delta\|^2 + b_2\Delta\frac{1}{2\lambda} \sum_{|i|>n} |f_{i}|^2,\\
& b_{2}\left( \theta_n u_{m+1}^\Delta,\theta_{n}u_{m}^\Delta + \left[\theta_{n}h+\theta_{n}\sigma( u_{m}^{\Delta})\right]\Delta W_{m} \right)
\leq  \frac{b_2}{2}\|\theta_n u_{m+1}^\Delta\|^2 + \frac{b_2}{2} \| \theta_{n}u_{m}^\Delta + \left[\theta_{n}h+\theta_{n}\sigma( u_{m}^{\Delta})\right]\Delta W_{m}\|^2,
\end{aligned}
\end{equation}
\begin{equation}\label{5-38-1}
\begin{aligned}
& b_{1}\Delta\left(\theta_n v_{m+1}^\Delta,\theta_{n}g\right) \leq b_1\Delta\frac{\lambda}{2} \|\theta_n v_{m+1}^\Delta\|^2 + b_1\Delta\frac{1}{2\lambda} \sum_{|i|>n} |g_{i}|^2,\\
& b_{1}\left( \theta_n v_{m+1}^\Delta,\theta_{n}v_{m}^\Delta + \left[\theta_{n}h+\theta_{n}\sigma( v_{m}^{\Delta})\right]\Delta W_{m} \right)
\leq  \frac{b_1}{2}\|\theta_n v_{m+1}^\Delta\|^2 + \frac{b_1}{2} \| \theta_{n}v_{m}^\Delta + \left[\theta_{n}h+\theta_{n}\sigma( v_{m}^{\Delta})\right]\Delta W_{m}\|^2.
\end{aligned}
\end{equation}
Combining \eqref{5-36}-\eqref{5-38-1} and \eqref{2.2}, we obtain
\begin{equation}\label{5-40}
\begin{aligned}
b_{2}\|\theta_{n}u_{m+1}^{\Delta}\|^2 \leq & b_{2}\left(\frac{1}{2} - \frac{\lambda}{2}\Delta \right) \|\theta_{n}u_{m+1}^\Delta\|^2 + \left( \frac{c(\theta)}{n} \|u_{m+1}^\Delta\|^2 + \frac{b_2}{2\lambda} \sum_{|i|>n} |f_{i}|^2 \right)\Delta \\
& + \frac{b_2}{2} \| \theta_{n}u_{m}^\Delta + \left[\theta_{n}h + \theta_{n}\sigma( u_{m}^{\Delta})\right]\Delta W_{m}\|^2,
\end{aligned}
\end{equation}
\begin{equation}\label{5-40-1}
\begin{aligned}
b_{1}\|\theta_{n}v_{m+1}^{\Delta}\|^2 \leq & b_{1}\left(\frac{1}{2} - \frac{\lambda}{2}\Delta \right) \|\theta_{n}v_{m+1}^\Delta\|^2 + \left( \frac{\tilde{c}(\theta)}{n} \|v_{m+1}^\Delta\|^2 + \frac{b_1}{2\lambda} \sum_{|i|>n} |g_{i}|^2 \right)\Delta \\
& + \frac{b_1}{2} \| \theta_{n}v_{m}^\Delta + \left[\theta_{n}h + \theta_{n}\sigma( v_{m}^{\Delta})\right]\Delta W_{m}\|^2.
\end{aligned}
\end{equation}
Using a proof similar to that of inequality \eqref{5-34}, we obtain
\begin{equation}\label{5-41}
\begin{aligned}
\mathbb{E}\left[ b_{2}\|\theta_{n}u_{m}^{\Delta}\|^2 + b_{1}\|\theta_{n}v_{m}^{\Delta}\|^2 \right] \leq 
& \left( b_{2}\|\theta_{n}u_{0}\|^2 + b_{1}\|\theta_{n}v_{0}\|^2 \right)e^{mln\left(1-\frac{\lambda}{4}\Delta\right)}\\
& + 2 \left[ \frac{c_1(\theta)}{n} (1+\|u_{0}\|^2) + \frac{c_2(\theta)}{n} (1+\|v_{0}\|^2) + \frac{b_2}{2\lambda} \sum_{|i|>n} |f_{i}|^2  + \frac{b_1}{2\lambda} \sum_{|i|>n} |g_{i}|^2 \right. \\
& \left. + (b_{2} + b_{1})\sum_{|i|>n} |h_{i}|^2 + 2(b_{2} + b_{1})\sum_{|i|>n} |\delta_{i}|^2 \right] \Delta \sum_{l=0}^{m-1}(1-\frac{\lambda}{4}\Delta)^l.
\end{aligned}
\end{equation}
For any $\eta_1 >0$, there exists  $N_1=N_1(\eta_1, B)\in \mathbb{N}$ such that for all $n \geq N_1$, it follows that
\begin{equation}\label{5-42}
2 \left( \frac{c_1(\theta)}{n} (1+\|u_{0}\|^2) + \frac{c_2(\theta)}{n} (1+\|v_{0}\|^2) \right) < \frac{\eta_1}{2}.
\end{equation}
Since $f=\left(f_{i}\right)_{i \in \mathbb{Z}}, g=\left(g_{i}\right)_{i \in \mathbb{Z}}, h=\left(h_{i}\right)_{i \in \mathbb{Z}}, \delta=\left(\delta_{i}\right)_{i \in \mathbb{Z}} \in\ell^2$, we find that there exists $N_2=N_2(\eta_1)\geq N_1$ such that for all $n\geq N_2$,
\begin{equation}\label{5-43}
2\left(\frac{b_2}{2\lambda} \sum_{|i|>n} |f_{i}|^2  + \frac{b_1}{2\lambda} \sum_{|i|>n} |g_{i}|^2 + (b_{2} + b_{1})\sum_{|i|>n} |h_{i}|^2 + 2(b_{2} + b_{1})\sum_{|i|>n} |\delta_{i}|^2 \right) <\frac{\eta_1}{2}.
\end{equation}
From \eqref{5-40}-\eqref{5-43}, we have
\begin{equation}\label{5-41}
\begin{aligned}
\mathbb{E}\left[ b_{2}\|\theta_{n}u_{m}^{\Delta}\|^2 + b_{1}\|\theta_{n}v_{m}^{\Delta}\|^2 \right] \leq & \left( b_{2}\|\theta_{n}u_{0}\|^2 + b_{1}\|\theta_{n}v_{0}\|^2 \right)e^{mln\left(1-\frac{\lambda}{4}\Delta\right)} + \eta_1\Delta \sum_{l=0}^{m-1}\left(1-\frac{\lambda}{4}\Delta\right)^l\\
\leq & \left( b_{2}\|\theta_{n}u_{0}\|^2 + b_{1}\|\theta_{n}v_{0}\|^2 \right)e^{mln\left(1-\frac{\lambda}{4}\Delta\right)} + \eta.
\end{aligned}
\end{equation}
The proof is complete.
\end{proof}

According to Theorem 2.7 in \cite{Liu-2015}, the sequence $\{\psi_m^\Delta\}_{m\in \mathbb{N}}$ is a time homogeneous Markov process with the Feller property. We now establish the tightness of the family of probability distributions corresponding to the solutions of \eqref{2-1.1}.

\begin{lemma}\label{lemma2.6} Suppose \eqref{5-10}, \eqref{5-11} and \eqref{5-29} hold. Then the distribution laws of the discrete process $\{\psi_m^\Delta\}_{m\in \mathbb{N}}$ are tight on $X$.
\end{lemma}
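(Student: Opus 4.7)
The plan is to verify tightness through the classical $\ell^2$ compactness criterion: a subset of $X=\ell^2\times\ell^2$ is relatively compact if and only if it is norm bounded and has uniformly vanishing tails. Accordingly, for each $\varepsilon>0$ I will construct a compact set $K_\varepsilon\subset X$ such that $\mathbb{P}(\psi_m^\Delta\in K_\varepsilon)\geq 1-\varepsilon$ for every $m\in\mathbb{N}_0$ and every admissible time step $\Delta$.

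First, using Lemma \ref{lemma2.4} together with the trivial bound $e^{m\ln(1-\lambda\Delta/4)}\leq 1$, I get the uniform second-moment estimate $\mathbb{E}(\|\psi_m^\Delta\|_X^2)\leq \|\psi_0\|_X^2+M$. Chebyshev's inequality then allows me to pick $R=R(\varepsilon)>0$ large enough so that
\[
\mathbb{P}\bigl(\|\psi_m^\Delta\|_X>R\bigr)\leq \frac{\|\psi_0\|_X^2+M}{R^2}<\frac{\varepsilon}{2}
\]
uniformly in $m$ and $\Delta$.

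Second, for each integer $k\geq 1$, I apply Lemma \ref{lemma2.5} with the choice $\eta_k=\varepsilon/(2^{k+2}k^2)$ and the bounded set $B=\{\psi_0\}$. This yields an integer $I_k$, independent of $\Delta$, such that
\[
\mathbb{E}\Bigl(\sum_{|i|>I_k}(b_2|u_{m,i}^\Delta|^2+b_1|v_{m,i}^\Delta|^2)\Bigr)\leq \Bigl(\sum_{|i|>I_k}(b_2|u_{0,i}|^2+b_1|v_{0,i}|^2)\Bigr)e^{m\ln(1-\lambda\Delta/4)}+\eta_k.
\]
Since $\psi_0\in X$, I may enlarge $I_k$ if necessary so that the initial tail is itself bounded by $\eta_k$, whence the right-hand side is at most $2\eta_k=\varepsilon/(2^{k+1}k^2)$ uniformly in $m$. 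A second application of Chebyshev gives
\[
\mathbb{P}\Bigl(\sum_{|i|>I_k}(b_2|u_{m,i}^\Delta|^2+b_1|v_{m,i}^\Delta|^2)>\tfrac{1}{k^2}\Bigr)\leq \frac{\varepsilon}{2^{k+1}}.
\]

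Finally, I set
\[
K_\varepsilon=\Bigl\{\psi=(u,v)^T\in X:\|\psi\|_X\leq R,\ \sum_{|i|>I_k}(b_2|u_i|^2+b_1|v_i|^2)\leq\tfrac{1}{k^2}\text{ for every }k\geq 1\Bigr\}.
\]
This set is norm bounded in $X$ and has uniformly vanishing tails, so $\overline{K_\varepsilon}$ is compact in $X$ by the $\ell^2$ compactness criterion. A union bound combining the norm estimate and the countably many tail estimates yields $\mathbb{P}(\psi_m^\Delta\notin K_\varepsilon)\leq \varepsilon/2+\sum_{k\geq 1}\varepsilon/2^{k+1}=\varepsilon$, proving tightness. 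The main obstacle is essentially bookkeeping: one must allocate $\varepsilon$ carefully across the norm threshold and the countably many tail thresholds so that the Chebyshev contributions sum to at most $\varepsilon$; all of the genuine analytic work, namely the uniform $X$-moment bound and the uniform tail estimate, is already contained in Lemmas \ref{lemma2.4} and \ref{lemma2.5}.
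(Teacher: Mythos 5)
Your proposal is correct and follows essentially the same route as the paper: the paper's proof of Lemma \ref{lemma2.6} simply invokes Lemmas \ref{lemma2.4} and \ref{lemma2.5} and defers the details to \cite{Wang-2019}, and those details are exactly the uniform moment bound, the uniform tail bound, Chebyshev's inequality, and a union bound over a compact set of the form you construct (the same construction appears explicitly in part (i) of the proof of Theorem \ref{theorem3.2}, with thresholds $2^{-k}$ in place of your $1/k^2$). Your write-up is a valid filling-in of the omitted argument rather than a genuinely different proof.
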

\begin{proof}
Using Lemmas \ref{lemma2.4} and \ref{lemma2.5}, the tightness of the distributions for $\{\psi_m^\Delta\}_{m\in \mathbb{N}}$ on $X$ follows from the results presented in \cite{Wang-2019}. 
The details are similar and therefore omitted here. 
\end{proof}

Subsequently, we demonstrate the existence of invariant measures for the BEM scheme \eqref{2-1.1}.
\begin{theorem}\label{theorem2.1} Suppose \eqref{5-10}, \eqref{5-11} and \eqref{5-29} hold. Then the BEM scheme \eqref{2-1.1} has a numerical invariant measure on $X$.
\end{theorem}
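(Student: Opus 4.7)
The plan is to apply the Krylov--Bogolyubov procedure to the time-homogeneous Markov chain $\{\psi_m^\Delta\}_{m\in\mathbb{N}}$ associated with the BEM scheme \eqref{2-1.1}. The three ingredients required are: (i) the Feller property of the transition semigroup, (ii) tightness of a suitable family of probability measures on $X$, and (iii) passage to the limit along a Cesàro average. Ingredient (i) is furnished by the remark preceding Lemma \ref{lemma2.6} (via Theorem 2.7 of \cite{Liu-2015}), and ingredient (ii) is precisely the content of Lemma \ref{lemma2.6}.

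First, I would fix an arbitrary initial state $\psi_0\in X$ and denote by $P_m(\psi_0,\cdot)$ the distribution of $\psi_m^\Delta$ on $X$ with $\psi_0^\Delta=\psi_0$. Define the Cesàro averages
\begin{equation*}
\mu_n^\Delta(\cdot)=\frac{1}{n}\sum_{m=1}^{n}P_m(\psi_0,\cdot),\qquad n\in\mathbb{N}.
\end{equation*}
Each $\mu_n^\Delta$ is a Borel probability measure on $X$. By Lemma \ref{lemma2.6}, the family $\{P_m(\psi_0,\cdot)\}_{m\in\mathbb{N}}$ is tight on $X$, and tightness is preserved under convex combinations, so the family $\{\mu_n^\Delta\}_{n\in\mathbb{N}}$ is also tight.

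Next, applying Prokhorov's theorem, I would extract a subsequence $\{\mu_{n_k}^\Delta\}_{k\in\mathbb{N}}$ that converges weakly to some Borel probability measure $\mu^\Delta$ on $X$. It remains to verify that $\mu^\Delta$ is invariant for the Markov transition operator $P$ of the scheme, i.e.\ that $\int_X P\varphi\,d\mu^\Delta=\int_X \varphi\,d\mu^\Delta$ for every $\varphi\in C_b(X)$. For such a test function, the Feller property gives $P\varphi\in C_b(X)$, so weak convergence can be used on both sides. A direct computation using the Markov property yields
\begin{equation*}
\int_X P\varphi\,d\mu_{n_k}^\Delta-\int_X \varphi\,d\mu_{n_k}^\Delta=\frac{1}{n_k}\bigl(P_{n_k+1}\varphi(\psi_0)-P_1\varphi(\psi_0)\bigr),
\end{equation*}
whose right-hand side tends to $0$ as $k\to\infty$ since $\varphi$ is bounded. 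Letting $k\to\infty$ on the left-hand side, the Feller property and weak convergence give the desired equality, so $\mu^\Delta$ is invariant.

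I do not expect substantial obstacles: all the hard analytic work (dissipative a priori estimate in $X$, tail estimate, and Feller property) has already been carried out in Lemmas \ref{lemma2.4}--\ref{lemma2.6}. The only mildly delicate point is ensuring that weak limits of the Cesàro averages commute with $P$ through the Feller property on the non-locally-compact Hilbert space $X$; this is standard once tightness and the Feller property are in hand, and I would therefore conclude the proof by citing Krylov--Bogolyubov in the form used in \cite{Wang-2019} or \cite{Liu-2015}.
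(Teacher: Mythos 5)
Your proposal is correct and follows essentially the same route as the paper: the paper's proof also applies the Krylov--Bogolyubov construction to the Cesàro averages of the transition probabilities, invokes the tightness from Lemma \ref{lemma2.6} to extract a weak limit $\mu^\Delta$, and uses the Feller property to identify it as an invariant measure. Your write-up merely makes explicit the telescoping identity and the passage to the limit that the paper delegates to a citation.
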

\begin{proof} Let $P^\Delta(t_j, \psi_0^\Delta; t_i, \Gamma)$ denote the transition probability operators for a time homogeneous Markov process $\psi_m^\Delta$ of BEM scheme \eqref{2-1.1}, where $\psi_0^\Delta \in X$, $0 \leq t_j \leq t_i$ with $i, j \in \mathbb{N}_0$, and $\Gamma \in B(X)$. For any $z \in X, i, n \in \mathbb{N}$, define
\begin{equation}\label{Th3.1-1}
\mu_n^\Delta = \frac{1}{n} \sum_{i=0}^n P^\Delta(0, z; t_i, \cdot), 
\end{equation}
where $t_i = i\Delta$. According to Lemma \ref{lemma2.6}, the sequence $\{\mu_n^\Delta\}_{n=1}^\infty$ is tight, and there exists a probability measure $\mu^\Delta$ on $X$ such that
\begin{equation}\label{Th3.1-2}
\mu_n^\Delta \to \mu^\Delta, \text{ as } n \to \infty. 
\end{equation}
Using the Krylov-Bogolyubov method and by \eqref{Th3.1-1}-\eqref{Th3.1-2} (See \cite{Lidingshi-2026} Theorem 3.7), for any $i\in \mathbb{N}_0$, we have
\begin{equation*}
\int_{X} \left( \int_{X} \phi(x) P^\Delta(0, y; t_i, dx) \right) d\mu^\Delta(y) = \int_{X} \phi(y) d\mu^\Delta(y),
\end{equation*}
which shows that $\mu^\Delta$ is the numerical invariant measure for the BEM scheme \eqref{2-1.1}.
\end{proof}

\section{Upper semi-continuous of the numerical invariant measure}\label{sec:3}

In this section, we demonstrate that the collection of invariant measures, denoted $S^\Delta$, for the BEM scheme \eqref{2-1.1}, converges upper semi-continuously to the collection $S^0$ of invariant measures for the continuous system \eqref{equ:1} as the time step $\Delta$ approaches zero. From Theorems \ref{theorem2.1} and \cite{Wang-2024}, we know that both $S^\Delta$ and $S^0$ are nonempty. First, we present an abstract result, established in \cite{Lidingshi-2025}, which states that uniform convergence in probability for the time discrete approximation, along with the uniform continuity in probability of the underlying continuous system, guarantees the convergence of the invariant measures.

Let $(\mathrm{Z}, d)$ be a Polish space equipped with the metric $d$. For each $\Delta \in (0, 1]$ and $x \in \mathrm{Z}$, assume that $\{X_k^\Delta(x), k \in \mathbb{N}\}$ forms a stochastic sequence on the probability space $(\Omega, \mathcal{F}, \{\mathcal{F}_t\}_{t \in \mathbb{R}^+}, P)$, starting with $x$ at time $0$. Additionally, $\{X^0(t, 0, x), t \geq 0\}$ represents a stochastic process on $(\Omega, \mathcal{F}, \{\mathcal{F}_t\}_{t \in \mathbb{R}^+}, P)$ with the initial condition $x(0) = x$. The probability transition operators associated with $X^0(t, 0, x)$ are Feller.

For any compact set $K\subseteq \mathrm{Z}$, $t\geq 0$, and $\eta > 0$, we define the following conditions:
\begin{equation}\label{5-52}
\lim_{\Delta\rightarrow 0}\sup_{x\in K}\mathbf{P}\left( d\left(X^{\Delta}_{\left[\frac{t}{\Delta}\right]}\left( x \right), X^{0}\left(\left[\frac{t}{\Delta}\right]\Delta, 0, x \right)\right) \geq \eta\right) = 0,
\end{equation}
\begin{equation}\label{5-53}
\lim_{\Delta\rightarrow 0}\sup_{x\in K}\mathbf{P}\left( d\left(X^{0}\left(\left[\frac{t}{\Delta}\right]\Delta, 0, x \right), X^{0}\left(t, 0, x \right)\right) \geq \eta\right) = 0.
\end{equation}
where $[x]$ is the greatest integer less than or equal to $x\in R$.

\begin{theorem}\label{theorem3.1}(\cite{Lidingshi-2025}) Suppose \eqref{5-52}, \eqref{5-53} hold and $\Delta_{n}\rightarrow 0$. If $\mu$ is a probability measure on $Z$, and $\mu^{\Delta_{n}}$ is an invariant measure of $X^{\Delta_{n}}$, with $\mu^{\Delta_n}$ weakly converging to $\mu$, then $\mu$ must be an invariant measure of $X^0$.
\end{theorem}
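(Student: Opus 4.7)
The plan is to pass to the limit $n\to\infty$ in the discrete-time invariance identity satisfied by $\mu^{\Delta_n}$ and show that the limit is precisely the continuous-time invariance identity for $\mu$. Concretely, I aim to establish
\begin{equation*}
\int_{Z}\mathbb{E}[\phi(X^{0}(t,0,x))]\,d\mu(x) = \int_{Z}\phi(x)\,d\mu(x), \qquad t\geq 0,\ \phi\in C_b(Z),
\end{equation*}
which is the standard characterization of invariance for the Feller semigroup associated to $X^{0}$. Fix $t\geq 0$, $\phi\in C_b(Z)$, and set $k_n=[t/\Delta_n]$. By the invariance of $\mu^{\Delta_n}$ for the Markov chain $\{X_k^{\Delta_n}\}$,
\begin{equation*}
\int_{Z}\mathbb{E}[\phi(X^{\Delta_n}_{k_n}(x))]\,d\mu^{\Delta_n}(x) = \int_{Z}\phi(x)\,d\mu^{\Delta_n}(x),
\end{equation*}
and the right-hand side tends to $\int_{Z}\phi\,d\mu$ by weak convergence $\mu^{\Delta_n}\Rightarrow\mu$, so everything reduces to identifying the limit of the left-hand side.

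For the left-hand side, I would split
\begin{equation*}
\int_{Z}\mathbb{E}[\phi(X^{\Delta_n}_{k_n}(x))]\,d\mu^{\Delta_n}(x) = \int_{Z}\mathbb{E}[\phi(X^{0}(t,0,x))]\,d\mu^{\Delta_n}(x) + R_n,
\end{equation*}
where $R_n$ is the integral of the pointwise difference. The Feller property of $X^{0}$ ensures that $x\mapsto\mathbb{E}[\phi(X^{0}(t,0,x))]$ belongs to $C_b(Z)$, so the first integral converges to $\int_{Z}\mathbb{E}[\phi(X^{0}(t,0,x))]\,d\mu(x)$ by weak convergence. It only remains to show that $R_n\to 0$.

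To handle $R_n$, I would use that weak convergence forces $\{\mu^{\Delta_n}\}$ to be tight: for any $\varepsilon>0$ there is a compact $K\subseteq Z$ with $\sup_n\mu^{\Delta_n}(K^c)<\varepsilon$, so the portion of $R_n$ over $K^c$ is at most $2\|\phi\|_\infty\varepsilon$. On $K$, the triangle inequality separates the integrand into the one-step discretization error $\mathbb{E}|\phi(X^{\Delta_n}_{k_n}(x))-\phi(X^{0}(k_n\Delta_n,0,x))|$, which I would control by hypothesis \eqref{5-52}, and the time-shift error $\mathbb{E}|\phi(X^{0}(k_n\Delta_n,0,x))-\phi(X^{0}(t,0,x))|$, which I would control by hypothesis \eqref{5-53}. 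For each piece I would split the expectation on $\{d(\cdot,\cdot)\geq\eta\}$ versus $\{d(\cdot,\cdot)<\eta\}$, using the uniform-in-$x\in K$ probability bound on the bad event and the continuity of $\phi$ on the good event. The step I expect to require the most care is the passage from uniform convergence in probability on $K$ to a uniform bound on expectations of $\phi$, since $\phi$ is only bounded continuous rather than globally uniformly continuous; this typically needs one further tightness argument, restricting the laws $\mathrm{Law}(X^{\Delta_n}_{k_n}(x))$ for $x\in K$ to a compact subset of $Z$ on which $\phi$ becomes uniformly continuous. Once that is in place, taking $\eta$ small and then $n$ large forces $R_n\to 0$, and the invariance of $\mu$ for $X^{0}$ follows.
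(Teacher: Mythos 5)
The paper itself does not prove this theorem: it is imported verbatim from \cite{Lidingshi-2025}, so there is no in-paper proof to compare against. Your argument is the standard one and is essentially sound: discrete invariance at step $k_n=[t/\Delta_n]$, weak convergence plus the Feller property to handle the two ``easy'' integrals, and tightness of $\{\mu^{\Delta_n}\}$ (automatic for a weakly convergent sequence on a Polish space, by Prokhorov) combined with \eqref{5-52} and \eqref{5-53} to kill the remainder $R_n$. The one technical point you flag --- upgrading uniform convergence in probability over $x\in K$ to convergence of $\mathbb{E}\,\phi(\cdot)$ when $\phi$ is merely in $C_b(Z)$ --- is genuine, but the extra tightness argument you sketch for the laws of $X^{\Delta_n}_{k_n}(x)$, $x\in K$, is more machinery than needed and would itself require a uniform-in-$n$ moment or tail bound that you have not established. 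The cleaner route, and the one this paper uses in the analogous finite-dimensional statement (Lemma \ref{lemma4.5}), is to verify the invariance identity only for $\phi\in UC_b(Z)$ (or bounded Lipschitz $\phi$): such test functions are measure-determining on a Polish space, so this already characterizes invariance, and for them the split over $\{d\geq\eta\}$ versus $\{d<\eta\}$ gives directly $|R_n|\leq 2\|\phi\|_\infty\bigl(\sup_{x\in K}\mathbf{P}(\cdot\geq\eta)+\varepsilon\bigr)+\omega_\phi(\eta)$ for each of the two error pieces, where $\omega_\phi$ is the modulus of continuity of $\phi$. With that substitution your proof closes without further work.
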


Define $C_{b}(X)$ as the space of all bounded continuous functions $\varphi: X\rightarrow R$, with the norm
\begin{equation*}
\|\varphi\|_{\infty} = \sup_{x\in X}|\varphi(x)|.
\end{equation*}
Let $L_{b}(X)$ represent the space of bounded Lipschitz functions on $X$, consisting of all functions $\varphi \in C_{b}(X)$ that satisfy the condition:
\begin{equation*}
Lip(\varphi):= \sup_{x_{1},x_{2}\in X,~ x_{1}\neq x_{2}} \frac{|\varphi(x_{1}) - \varphi(x_{2})|}{\|x_{1} - x_{2}\|} < \infty.
\end{equation*}
The space $L_{b}(X)$ is equipped with the norm given by
\begin{equation*}
\|\varphi\|_{L} = \|\varphi\|_{\infty} + Lip(\varphi).
\end{equation*}
Let $P(X)$ denote the set of probability measures on $\left(X,B(X)\right)$, with $B(X)$ being the Borel $\sigma$-algebra on $X$. For any $\varphi\in C_{b}(X)$ and $\mu\in P(X)$, define     
\begin{equation*}
(\varphi,\mu) = \int_{X}\varphi(x)\mu(dx).
\end{equation*}
Note that a sequence $\{\mu_{n}\}_{n=1}^{\infty}\subseteq P(X)$ converges weakly to $\mu \in P(X)$. For any $\varphi\in C_{b}(X)$, we have
\begin{equation*}
\lim_{n\rightarrow \infty}(\varphi,\mu_n) = (\varphi,\mu).
\end{equation*}
Define a metric by
\begin{equation*}
d_{P(X)}(\mu_{1}, \mu_{2}) = \sup_{\varphi\in L_{b}(X),~\|\varphi\|_{L}\leq 1} |(\varphi,\mu_{1})-(\varphi,\mu_{2})|,~~~~~~~\forall \mu_{1},~\mu_{2}\in P(X).
\end{equation*}
Thus $\left( P(X),~ d_{P(X)} \right)$ forms a Polish space. Furthermore, a sequence $\{\mu_n\}_{n=1}^\infty\subseteq P(X)$ converges to $\mu$ in $\left( P(X),~ d_{P(X)} \right)$ if and only if $\{\mu_n\}_{n=1}^\infty$ converges weakly to $\mu$.

The main result of this section is stated below.
\begin{theorem}\label{theorem3.2} Suppose \eqref{5-10}, \eqref{5-11} and \eqref{5-29} hold. Then
\begin{equation}\label{5-60}
\lim_{\Delta\rightarrow 0} d_{P(X)} \left(S^\Delta,S^0\right) = 0.
\end{equation}
\end{theorem}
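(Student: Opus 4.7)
The plan is to apply the abstract convergence framework of Theorem~\ref{theorem3.1} and argue by a standard contradiction/subsequence extraction. If \eqref{5-60} fails, then there exist $\varepsilon_{0}>0$, a sequence $\Delta_{n}\to 0$, and invariant measures $\mu^{\Delta_{n}}\in S^{\Delta_{n}}$ such that $\inf_{\nu\in S^{0}}d_{P(X)}(\mu^{\Delta_{n}},\nu)\geq \varepsilon_{0}$ for every $n$. The goal is to extract a weak limit of $\{\mu^{\Delta_{n}}\}$ and show that this limit lies in $S^{0}$, which contradicts the assumed separation.

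First I would prove tightness of the family $\{\mu^{\Delta}\}_{0<\Delta<\Delta^{*}}$ on $X$. By stationarity, $\mu^{\Delta}$ inherits the uniform second-moment bound of Lemma~\ref{lemma2.4} and the uniform tail bound of Lemma~\ref{lemma2.5} (letting $m\to\infty$ in each, using $e^{m\ln(1-\lambda\Delta/4)}\to 0$), giving $\int_{X}\|\psi\|_{X}^{2}\,d\mu^{\Delta}(\psi)\leq M$ and $\int_{X}\sum_{|i|>I}(b_{2}|u_{i}|^{2}+b_{1}|v_{i}|^{2})\,d\mu^{\Delta}(\psi)\leq \eta$, uniformly in $\Delta$. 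These two estimates yield tightness on $X=\ell^{2}\times\ell^{2}$ by the standard argument of \cite{Wang-2019}. Hence a subsequence (still indexed by $n$) satisfies $\mu^{\Delta_{n}}\rightharpoonup \mu$ in $P(X)$.

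Next I would verify the two hypotheses \eqref{5-52} and \eqref{5-53} of Theorem~\ref{theorem3.1}. Condition \eqref{5-53} is the easier one: it follows from the sample-path continuity of the unique global solution $\psi(\cdot,0,\psi_{0})$ to \eqref{equ:1} established in \cite{Wang-2024}, combined with the uniform moment control on compact initial sets $K\subset X$, since $[t/\Delta]\Delta\to t$ as $\Delta\to 0$. Condition \eqref{5-52} is the heart of the argument and amounts to a strong convergence rate for the BEM scheme. I would set $e_{m}^{\Delta}:=\psi^{\Delta}_{m}-\psi(m\Delta)$, subtract the two evolutions, and take the inner product with $e_{m+1}^{\Delta}$ in $(X,\langle\cdot,\cdot\rangle)$. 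Using $(Ae,e)\geq 0$, the dissipativity in $a_{1},a_{2}$, the cancellation \eqref{2.2} for the cross-terms, the local Lipschitz bounds \eqref{2.4}--\eqref{2.5} for $F$ and $G$, and the Lipschitz bound \eqref{5-13} for $\sigma$, one derives a recursive inequality of the form
\begin{equation*}
\mathbb{E}\bigl(\|e_{m+1}^{\Delta}\|_{X}^{2}\mathbf{1}_{\{\tau_{R}>(m+1)\Delta\}}\bigr)
\leq (1+C(R)\Delta)\,\mathbb{E}\bigl(\|e_{m}^{\Delta}\|_{X}^{2}\mathbf{1}_{\{\tau_{R}>m\Delta\}}\bigr)+C(R,K)\Delta^{2},
\end{equation*}
where $\tau_{R}=\inf\{t:\|\psi(t)\|_{X}\vee\|\psi^{\Delta}_{[t/\Delta]}\|_{X}>R\}$. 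A discrete Gronwall argument then yields $\mathbb{E}(\|e_{[t/\Delta]}^{\Delta}\|_{X}^{2}\mathbf{1}_{\{\tau_{R}>t\}})\leq C(R,t,K)\Delta$, so Chebyshev gives \eqref{5-52} after first choosing $R$ large (using the uniform moment estimate of Lemma~\ref{lemma2.4} together with its continuous-time counterpart from \cite{Wang-2024} to make $\sup_{\Delta}\mathbf{P}(\tau_{R}\leq t)$ arbitrarily small, uniformly in $x\in K$).

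With both hypotheses verified, Theorem~\ref{theorem3.1} ensures that the weak limit $\mu$ of $\{\mu^{\Delta_{n}}\}$ is an invariant measure of the continuous system \eqref{equ:1}, i.e.\ $\mu\in S^{0}$. Since $d_{P(X)}(\mu^{\Delta_{n}},\mu)\to 0$ by the correspondence between weak convergence and convergence in the metric $d_{P(X)}$ on $P(X)$, this contradicts $\inf_{\nu\in S^{0}}d_{P(X)}(\mu^{\Delta_{n}},\nu)\geq \varepsilon_{0}$, completing the proof. The main obstacle I anticipate is the strong error estimate required for \eqref{5-52}: because $F$ and $G$ are only locally Lipschitz (with constants $c_{1}(n),c_{2}(n)$ that blow up in $n$), and $\sigma$ is globally Lipschitz but nonlinear, the estimate must be localized by stopping times and combined with uniform-in-$\Delta$ moment bounds. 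The implicitness of the BEM scheme is crucial here: it lets one keep the dissipative terms $-d_{i}A$ and $-a_{i}$ evaluated at time $m+1$, which is what allows the monotonicity cancellation \eqref{2.2} to be used to absorb the polynomial nonlinearities without an explicit CFL-type restriction.
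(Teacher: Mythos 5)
Your proposal is correct and follows essentially the same route as the paper: tightness of $\bigcup_{\Delta\in(0,\Delta^*)}S^{\Delta}$ deduced from the uniform estimates of Lemmas~\ref{lemma2.4} and \ref{lemma2.5} (the paper transfers these to $\mu^{\Delta}$ via invariance and Fatou applied to probabilities of a fixed compact set rather than via moment integrals, but the effect is identical), followed by the contradiction/subsequence extraction and an appeal to Theorem~\ref{theorem3.1}. The only substantive difference is that you sketch the verification of \eqref{5-52}--\eqref{5-53} through a localized strong-error estimate for the BEM scheme, whereas the paper does not verify these hypotheses within its proof and instead cites \cite{Lidingshi-2025} and \cite{Lidingshi-2026}; your additional step fills in the part the paper leaves implicit.
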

\begin{proof}
(i) Firstly, we demonstrate that the set $\cup_{\Delta\in(0,\Delta^{*})}S^\Delta$ is tight in the space $\left( P(X),~ d_{P(X)} \right)$.
According to Lemma \ref{lemma2.4}, for any $\xi\in X$, there exist constants $T_1>0$ and $c_1>0$ independent of $\Delta$, such that 
\begin{equation*}
\mathbb{E}\left(\|\psi_m^\Delta\|_X^2\right)\leq c_1,  ~~~~for~ all~  m \geq \left[ \frac{T_1}{-\ln\left(1-\frac{\lambda}{4}\Delta\right)}\right]+1,
\end{equation*}
where $\psi_m^\Delta(\xi)$ is abbreviated as $\psi_m^\Delta$. Using Chebyshev's inequality, for any $\eta>0$, there exists $R_1=R_1(\eta)>0$, independent of $\triangle$, such that for all $m \geq \left[ \frac{T_1}{-\ln\left(1-\frac{\lambda}{4}\Delta\right)}\right]+1$, we have
\begin{equation}\label{5-61}
\mathbf{P} \left( \|\psi_m^\Delta\|_X^2  \geq R_1 \right) \leq \frac{\eta}{2}.
\end{equation}
By Lemma \ref{lemma2.5}, for any $\psi_0\in X,~\eta>0$ and $k\in\mathbb N$, there exists an integer $n_k=n_k(\eta,k)$ independent of $\Delta$, such that for all $T_2>T_1$ and $m \geq \left[ \frac{T_2}{-\ln\left(1-\frac{\lambda}{4}\Delta\right)}\right]+1$, the following holds:
\begin{equation*}
\mathbb{E}\left(\sum_{|i| > n_k}(b_{2}|u_{m,i}^{\Delta}|^{2}+b_{1}|v_{m,i}^{\Delta}|^{2})\right)< \frac{\eta}{2^{2k}}.
\end{equation*}
Therefore, for all $m \geq \left[ \frac{T_2}{-\ln\left(1-\frac{\lambda}{4}\Delta\right)}\right]+1$ and $k\in \mathbb{N}$, we obtain
\begin{equation}\label{5-62}
\mathbf{P} \left(\sum_{|i|> n_k}(b_{2}|u_{m,i}^{\Delta}|^{2}+b_{1}|v_{m,i}^{\Delta}|^{2}) \geq \frac{1}{2^{k}} \right) \leq 2^{k} \mathbb{E}\left(\sum_{|i|> n_k}(b_{2}|u_{m,i}^{\Delta}|^{2}+b_{1}|v_{m,i}^{\Delta}|^{2})\right) < \frac{\eta}{2^{k}}.
\end{equation}
By \eqref{5-62}, for all $m \geq \left[ \frac{T_2}{-\ln\left(1-\frac{\lambda}{4}\Delta\right)}\right]+1$, it follows that
\begin{equation*}
\mathbf{P} \left(\bigcup_{k=1}^\infty \left[\sum_{|i|> n_k}(b_{2}|u_{m,i}^{\Delta}|^{2}+b_{1}|v_{m,i}^{\Delta}|^{2}) \geq \frac{1}{2^{k}} \right]\right) \leq \sum_{k=1}^{\infty}\frac{\eta}{2^{k}} \leq \eta.
\end{equation*}
This implies that for all $m \geq \left[ \frac{T_2}{-\ln\left(1-\frac{\lambda}{4}\Delta\right)}\right]+1$, 
\begin{equation}\label{5-63}
\mathbf{P} \left(\sum_{|i|> n_k}(b_{2}|u_{m,i}^{\Delta}|^{2} + b_{1}|v_{m,i}^{\Delta}|^{2}) \leq \frac{1}{2^{k}}~ for ~all~k\in \mathbb{N} \right) >1- \frac{\eta}{2}. 
\end{equation}
Let $\eta>0$, and define
\begin{equation*}
\begin{aligned}
&\mathbf{Y}_{1,\eta}=\{\psi\in X: \|\psi\|_X\leq R_{1}\},\\
&\mathbf{Y}_{2,\eta}=\{\psi\in X: \sum_{|i|> n_m}(b_{2}|u_{i}|^{2} + b_{1}|v_{i}|^{2}) \leq \frac{1}{2^{k}},~~k\in \mathbb{N}\},\\
&~~~~~~~~~~~~~~\mathbf{Y}_{\eta} = \mathbf{Y}_{1,\eta}\bigcap \mathbf{Y}_{2,\eta}.
\end{aligned}
\end{equation*}
The compactness of $\mathbf{Y}_{\eta}$ can be founded in \cite{Chen-2020}. From \eqref{5-61} and \eqref{5-63}, for any $\eta>0$ and $\Delta\in (0,\Delta^*)$, there exists a compact subset $\mathbf{K}_\eta\in X$, which is independent of $\Delta$. For any $\xi\in X$, there exists $T^*=T^*(\xi)>0$ such that for all integers $m \geq \left[ \frac{T^*}{-\ln\left(1-\frac{\lambda}{4}\Delta\right)}\right]+1$, the following holds:
\begin{equation}\label{5-64}
\mathbf{P} \left(\psi_m^\Delta(\xi)  \in \mathbf{K}_\eta \right) > 1 - \eta.
\end{equation}
Assume $\mu^{\Delta}\in S^{\Delta}$ for some $\Delta\in(0,\Delta^*)$. Because $\mu^{\Delta}$ is an invariant measure, for all $m\in \mathbb{N}$ and any $m_1\in \mathbb{N}$, we have
\begin{equation}\label{5-65}
\int_{X} \mathbf{P} \left( \psi_m^\Delta(\xi)  \in \mathbf{K}_{\eta} \right) \mu^{\Delta} (d\xi) = 
\int_{X} \mathbf{P} \left( \psi_{m+m_1}^\Delta(\xi)  \in \mathbf{K}_{\eta} \right) \mu^{\Delta} (d\xi).
\end{equation}
Additionally,
\begin{equation}\label{5-66}
\int_{X} \mathbf{P} \left( \psi_0^\Delta  \in \mathbf{K}_{\eta} \right) \mu^{\Delta} (d\xi) = 
\int_{X} \mathbf{1}_{\mathbf{K}_{\eta}}\mu^{\Delta} (d\xi) = \mu^{\Delta}(\mathbf{K}_{\eta}).
\end{equation}
From \eqref{5-65}-\eqref{5-66} with $m=0$, for any $m_1\in\mathbb{N}$, we obtain
\begin{equation*}
\mu^{\Delta}(\mathbf{K}_{\eta}) = \int_{X} \mathbf{P} \left( \psi_{m_1}^\Delta(\xi)  \in \mathbf{K}_{\eta} \right) \mu^{\Delta} (d\xi).
\end{equation*}
Using \eqref{5-64} and Fatou's theorem, we then get
\begin{equation}\label{5-67}
\begin{aligned}
\mu^{\Delta}(\mathbf{K}_{\eta}) = &\liminf_{m_{1}\rightarrow\infty}\int_{X} \mathbf{P} \left( \psi_{m_1}^\Delta(\xi)  \in \mathbf{K}_{\eta} \right) \mu^{\Delta} (d\xi)\\
& \geq\int_{X} \liminf_{m_{1}\rightarrow\infty}\mathbf{P} \left(  \psi_{m_1}^\Delta(\xi)  \in \mathbf{K}_{\eta} \right) \mu^{\Delta} (d\xi)\\
& \geq \int_{X} (1-\eta) \mu^{\Delta} (d\xi)\\
& = 1-\eta.
\end{aligned}
\end{equation}
Since $\mu^{\Delta}\in S^{\Delta}$ is arbitrary, this shows that $\cup_{\Delta\in(0,\Delta^{*})}S^\Delta$ is tight, as derived from \eqref{5-67}.

(ii). Suppose that \eqref{5-60} does not hold. Then, there exists a positive constant $\eta$ and a sequence $\Delta_n\rightarrow 0$ such that for all $n\in \mathbb{N}$, we have
\begin{equation}\label{5-68}
d_{P(X)} \left(S^{\Delta_n},S^0\right) \geq \eta.
\end{equation}
From this inequality, we can conclude that there exists a sequence $\{x_{n}\}_{n=1}^{\infty}$ with $x_n\in S^{\Delta_n}$ such that
\begin{equation}\label{5-69}
d_{P(X)} \left(x_{n},S^0\right) \geq \eta, ~~~~n\in \mathbb{N}.
\end{equation}
From (i), the sequence $\{S^{\Delta_{n}}\}_{n=1}^{\infty}$ is compact, so there exists $x_{0}\in P(X)$ such that
\begin{equation}\label{5-70}
\lim_{n\rightarrow \infty} x_{n} = x_{0}.
\end{equation}
By Theorem \ref{theorem3.1} and \cite{Lidingshi-2026}, it follows that $x_0$ belongs to $S^0$. Therefore, from the convergence in \eqref{5-70}, we have
\begin{equation*}
dist\left(x_{n},S^0\right) \leq dist\left(x_{n}, x_{0}\right) \rightarrow 0, ~~~~n\rightarrow \infty,
\end{equation*}
which contradicts \eqref{5-69}. Thus, \eqref{5-60} must be true.
\end{proof}

\section{Finite dimensional approximations}\label{sec:4}

In this section, the finite dimensional approximation for the BEM scheme \eqref{2-1.1} is considered. We provide a $(2N+1)$ dimensional approximation and apply the periodic boundary conditions as in \cite{Han-2020}, that is, $u_{m+1,N}^{\Delta} = u_{m+1,-N-1}^{\Delta},~u_{m+1,-N}^{\Delta} = u_{m+1,N+1}^{\Delta},~v_{m+1,N}^{\Delta} = v_{m+1,-N-1}^{\Delta},~v_{m+1,-N}^{\Delta} = v_{m+1,N+1}^{\Delta}$. Define the linear operator $A_N,~B_N,~B^*_N:X\rightarrow X$ as follows:
\begin{equation*}\label{5-71}
\left(A_Nu\right)_i=
\left\{\begin{array}{l}
         -u_{N-1} + 2u_{N} - u_{-N},~~~i=N\\
         -u_{i-1} + 2u_{i} - u_{i+1},~~~|i|<N\\
         -u_{N} + 2u_{-N} - u_{-N+1},~~~i=-N\\
         0,~~~~|i|>N+1.
 \end{array}\right.
\end{equation*}
\begin{equation*}
\left(B_Nu\right)_i=
\left\{\begin{array}{l}
         u_{-N} - u_{N},~~~i=N\\
         u_{i+1} - u_{i},~~~|i|<N\\
         u_{-N+1} - u_{-N},~~~i=-N\\
         0,~~~~|i|>N+1.
 \end{array}~~~~~~~~~~~~~\right.
\end{equation*}
and
\begin{equation*}
\left(B^*_Nu\right)_i=
\left\{\begin{array}{l}
         u_{N-1} - u_{N},~~~i=N\\
         u_{i-1} - u_{i},~~~|i|<N\\
         u_{N} - u_{-N},~~~i=-N\\
         0,~~~~|i|>N+1.
 \end{array}~~~~~~~~~~~~~~~~~~~\right.
\end{equation*}
The finite dimensional BEM scheme can then be expressed as 
\begin{equation}\label{5-73}
\left\{\begin{array}{l}
         u_{m+1}^{\Delta,N} = u_{m}^{\Delta,N} - d_{1}A_Nu_{m+1}^{\Delta,N}\Delta - a_{1}u_{m+1}^{\Delta,N}\Delta + b_{1}F(u_{m+1}^{\Delta,N},v_{m+1}^{\Delta,N})\Delta
         -b_{2}G(u_{m+1}^{\Delta,N})\Delta + f_N\Delta + \left[h_N+\sigma_N( u_{m}^{\Delta,N})\right]\Delta W_{m},
         \vspace{2.0ex}\\
         v_{m+1}^{\Delta,N} = v_{m}^{\Delta,N} - d_{2}A_Nv_{m+1}^{\Delta,N}\Delta - a_{2}v_{m+1}^{\Delta,N}\Delta - b_{1}F(u_{m+1}^{\Delta,N},v_{m+1}^{\Delta,N})\Delta
         +b_{2}G(u_{m+1}^{\Delta,N})\Delta + g_N\Delta + \left[h_N+\sigma_N( v_{m}^{\Delta,N})\right]\Delta W_{m},
 \end{array}\right.
\end{equation}
with the initial condition
\begin{equation*}
\psi_{0}^{\Delta,N} = \left(u_{0,i},v_{0,i}\right)_{|i|\leq N}\in \mathbb{R}^{2N+1}\times \mathbb{R}^{2N+1},     
\end{equation*}
where
\begin{equation*}
\begin{aligned}
u_{m}^{\Delta,N} = \{u_{m,i}^\Delta\}_{|i|\leq N},~~&v_{m}^{\Delta,N} = \{v_{m,i}^\Delta\}_{|i|\leq N},~~f_{N} = \{f_i \}_{|i|\leq N},  ~~g_{N} = \{g_i \}_{|i|\leq N}, ~~h_{N} = \{h_i \}_{|i|\leq N}, \\ &\sigma_{N}\left(u_{m}^{\Delta,N}\right) = \{\sigma_{i}\left(u_{m,i}^{\Delta}\right)\}_{|i|\leq N}, ~~\sigma_{N}\left(v_{m}^{\Delta,N}\right) = \{\sigma_{i}\left(v_{m,i}^{\Delta}\right)\}_{|i|\leq N}. ~~~~~~~~~~~
\end{aligned}
\end{equation*}
 
For each $u=\left( u_i \right)_{|i|\leq N},~v=\left( v_i \right)_{|i|\leq N}\in \mathbb{R}^{2N+1}$, we can naturally extend these to elements $u=\left( u_i \right)_{i\in \mathbb{Z}},~v=\left( v_i \right)_{i\in \mathbb{Z}}$ in $X$ by defining $u_i=0,~v_i=0$ for all $|i|>N$. 
In the following, we provide an important estimate for the solutions of \eqref{5-73}, denoted by $\psi_{m}^{\Delta,N}$.
\begin{lemma}\label{lemma4.1} Suppose \eqref{5-10}, \eqref{5-11}, \eqref{5-29} hold and $0<\Delta<\Delta^*$. Then, the solutions of \eqref{5-73} satisfy
\begin{equation}
\mathbb{E}\left(\|\psi_{m}^{\Delta,N}\|_X^2\right) \leq \|\psi_0\|_X^2 e^{mln\left(1-\frac{\lambda}{4}\Delta\right)} + M, 
\end{equation}
where $M>0$ is independent of $\Delta$ and $N$.
\end{lemma}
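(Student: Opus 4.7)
The plan is to mirror the proof of Lemma \ref{lemma2.4} almost verbatim, with the only substantive task being to verify that every constant produced along the way is independent of the truncation parameter $N$. To this end, I would take the inner product in $X$ of the two components of \eqref{5-73} with $b_2 u_{m+1}^{\Delta,N}$ and $b_1 v_{m+1}^{\Delta,N}$ respectively, exactly as was done to derive \eqref{2-1.2}.

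The first key point is the dissipativity of the discretized Laplacian with periodic boundary conditions. A summation by parts (using $u_{m+1,N+1}^{\Delta,N} = u_{m+1,-N}^{\Delta,N}$, etc.) gives $(A_N u, u) = \|B_N u\|^2 \ge 0$, and similarly for $v$, so the diffusion terms contribute nonpositively on the right-hand side. The second key point is that the cross-cancellation identity \eqref{2.2} is pointwise in $i$, so when the $u$ and $v$ equations are summed with the weights $b_2$ and $b_1$ the nonlinear terms $F$ and $G$ cancel in exactly the same way as in the infinite-dimensional case. These two observations reduce the right-hand side to essentially the same estimate derived in the proof of Lemma \ref{lemma2.4}.

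From here I would apply Young's inequality to the forcing and stochastic terms, take expectations (the martingale increments vanish), and invoke the linear growth bound \eqref{5-12} applied componentwise to $\sigma_N$. Crucially, truncation only shrinks $\ell^2$ norms, so $\|f_N\| \le \|f\|$, $\|g_N\| \le \|g\|$, $\|h_N\| \le \|h\|$, and the corresponding bound $\|\delta_N\| \le \|\delta\|$ holds. Under the condition \eqref{5-29} and for $0<\Delta<\Delta^*$ (so that \eqref{5-32} holds), this produces the one-step recursion
\begin{equation*}
\mathbb{E}\left(\|\psi_{m+1}^{\Delta,N}\|_X^2\right) \le \left(1-\tfrac{\lambda}{4}\Delta\right)\mathbb{E}\left(\|\psi_{m}^{\Delta,N}\|_X^2\right) + \Delta\, C,
\end{equation*}
where $C$ depends only on $\lambda$, $b_1$, $b_2$, and the $\ell^2$ norms of $f,g,h,\delta$, and hence is independent of $N$ and $\Delta$.

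Iterating this inequality and summing the geometric series $\sum_{l=0}^{m-1}(1-\frac{\lambda}{4}\Delta)^l \le 4/(\lambda\Delta)$ yields the claimed bound with $M = 4C/\lambda$. I do not expect any genuine obstacle here; the only thing that requires care is confirming the $N$-uniformity of $M$, which is ensured by (a) the pointwise cancellation \eqref{2.2}, (b) the nonnegativity of the periodic discrete Laplacian, and (c) the trivial monotonicity of $\ell^2$ norms under truncation.
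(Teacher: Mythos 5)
Your proposal is correct and follows essentially the same route as the paper: take the weighted inner product with $(b_2 u_{m+1}^{\Delta,N}, b_1 v_{m+1}^{\Delta,N})$, use $(A_N u,u)=\|B_N u\|^2\ge 0$ and the pointwise cancellation \eqref{2.2}, and iterate the resulting one-step contraction, with $N$-uniformity of $M$ coming from $\|f_N\|\le\|f\|$ and its analogues. Your explicit verification of the $N$-independence of the constants is, if anything, slightly more careful than the paper's write-up.
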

\begin{proof} 
By applying Lemma \ref{lemma2.4}, taking the inner product in $\mathbb{R}^{2N+1}\times \mathbb{R}^{2N+1}$ of \eqref{5-73} with $\left(b_2u_{m+1}^{\Delta,N}, b_1v_{m+1}^{\Delta,N}\right)$, we have
\begin{equation}
\begin{aligned}
& b_{2}\|u_{m+1}^{\Delta,N}\|^2 +  b_{1}\|v_{m+1}^{\Delta,N}\|^2 \\
=& - b_{2}d_{1}\|B_Nu_{m+1}^{\Delta,N}\|^2\Delta - b_{2}a_{1}\|u_{m+1}^{\Delta,N}\|^2\Delta + b_{2}b_{1}(F(u_{m+1}^{\Delta,N},v_{m+1}^{\Delta,N}),u_{m+1}^{\Delta,N})\Delta\vspace{1.0ex}\\
& - b_{2}^2(G(u_{m+1}^{\Delta,N}),u_{m+1}^{\Delta,N})\Delta + b_{2}(f_N,u_{m+1}^{\Delta,N})\Delta + b_{2}\left(u_{m}^{\Delta,N} +\left[h_N+\sigma_N( u_{m}^{\Delta,N})\right]\Delta W_{m},u_{m+1}^{\Delta,N}\right)\\
& - b_{1}d_{2}\|B_Nv_{m+1}^{\Delta,N}\|^2\Delta - b_{1}a_{2}\|v_{m+1}^{\Delta,N}\|^2\Delta - b_{1}^2(F(u_{m+1}^{\Delta,N},v_{m+1}^{\Delta,N}))\Delta\vspace{1.0ex}\\
& + b_{1}b_{2}(G(u_{m+1}^{\Delta,N}),v_{m+1}^{\Delta,N})\Delta + b_{1}(g_N,v_{m+1}^{\Delta,N})\Delta + b_{1}\left(v_{m}^{\Delta,N} + \left[h_N+\sigma_N( v_{m}^{\Delta,N})\right]\Delta W_{m},v_{m+1}^{\Delta,N})\right)\\
\leq & (\frac{1}{2}-\frac{\lambda}{2}\Delta)b_{2}\|u_{m+1}^{\Delta,N}\|^2 + b_{2}\Delta\frac{1}{2\lambda}\|f_N\|^2 + \frac{1}{2}b_{2}\|u_{m}^{\Delta,N}\|^2\\
& + \frac{1}{2}b_{2}\|\left(h_N+\sigma_N( u_{m}^{\Delta,N})\right)\Delta W_{m}\|^2 + b_{2}\left(u_{m}^{\Delta,N}, \left(h_N+\sigma_N( u_{m}^{\Delta,N})\right)\Delta W_{m}\right)\\
& + (\frac{1}{2}-\frac{\lambda}{2}\Delta)b_{1}\|v_{m+1}^{\Delta,N}\|^2 + b_{1}\Delta\frac{1}{2\lambda}\|g_N\|^2 + \frac{1}{2}b_{1}\|v_{m}^{\Delta,N}\|^2\\
& +\frac{1}{2}b_{1}\|\left(h_N+\sigma_N( v_{m}^{\Delta,N})\right)\Delta W_{m}\|^2 + b_{1}\left(v_{m}^{\Delta,N}, \left(h_N+\sigma_N( v_{m}^{\Delta,N})\right)\Delta W_{m}\right).
\end{aligned}
\end{equation}
Then, we get
\begin{equation}
\begin{aligned}
&\mathbb{E}\left(\|\psi_{m+1}^{\Delta,N}\|_X^2\right) = \mathbb{E}\left(b_{2}\|u_{m+1}^{\Delta,N}\|^2 +  b_{1}\|v_{m+1}^{\Delta,N}\|^2\right) \\
\leq & \left(1-\frac{\lambda}{4}\Delta\right)^{m+1}\left(b_2\|u_{0}^{\Delta,N}\|^2 + b_1\|v_{0}^{\Delta,N}\|^2\right) \\
& + \Delta\left(\frac{b_{2}}{\lambda}\|f_N\|^2 + \frac{b_{1}}{\lambda}\|g_N\|^2 + 2b_2\|h_N\|^2 +2b_1\|h_N\|^2 +4b_2\|\delta_N\|^2 +4b_1\|\delta_N\|^2 \right) \sum^m_{l=0}\left(1-\frac{\lambda}{4}\Delta\right)^{l}\\
\leq & \left(1-\frac{\lambda}{4}\Delta\right)^{m+1}\|\psi_0^{\Delta,N}\|_X^2 + \left(\frac{b_{2}}{\lambda}\|f_N\|^2 + \frac{b_{1}}{\lambda}\|g_N\|^2 + 2b_2\|h_N\|^2 +2b_1\|h_N\|^2 +4b_2\|\delta_N\|^2 +4b_1\|\delta_N\|^2 \right)\frac{4}{\lambda}. 
\end{aligned}
\end{equation}
That is, 
\begin{equation}
\mathbb{E}\left(\|\psi_{m}^{\Delta,N}\|_X^2\right) \leq \|\psi_0^{\Delta,N}\|_X^2 e^{mln\left(1-\frac{\lambda}{4}\Delta\right)} + M. 
\end{equation}
\end{proof}

The proof of the following lemma follows similarly to that of Lemma \ref{lemma2.5}, therefore we omit it here.

\begin{lemma}\label{lemma4.2} Suppose \eqref{5-10}, \eqref{5-11}, \eqref{5-29} hold and $0<\Delta<\Delta^*$. Then, for any $\eta>0$ and a bounded set $B\in \mathbb{R}^{2N+1}\times \mathbb{R}^{2N+1}$, there exists $I=I(\eta,B)\in \mathbb{N}$, independent of $N$, such that for $N\in \mathbb{N}$ and $\psi_{0}\in B$, the solutions of \eqref{5-73} satisfy
\begin{equation*}
\mathbb{E}\left(\sum_{I < |i| \leq N}(b_{2}|u_{m,i}^{\Delta}|^{2}+b_{1}|v_{m,i}^{\Delta}|^{2})\right) 
\leq \sum_{|i|\leq N}(b_{2}|u_{0,i}^{\Delta}|^{2}+b_{1}|v_{0,i}^{\Delta}|^{2}) e^{mln\left(1-\frac{\lambda}{4}\Delta\right)} + \eta. 
\end{equation*}
\end{lemma}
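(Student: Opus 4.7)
The plan is to mirror the proof of Lemma \ref{lemma2.5} almost line by line, using the same cutoff function $\theta$ to isolate the tail indices, with the only novelty being care taken at the periodic boundary of $B_N$. I will reuse $\theta:\mathbb{R}^+\to[0,1]$ (with $\theta\equiv 0$ on $[0,1]$ and $\theta\equiv 1$ on $[2,\infty)$) and form, for a scale parameter $n$ to be chosen, $\theta_n=(\theta(i/n))_{|i|\leq N}$. Taking the inner product in $\mathbb{R}^{2N+1}\times\mathbb{R}^{2N+1}$ of the two equations in \eqref{5-73} against $b_2\theta_n^2 u_{m+1}^{\Delta,N}$ and $b_1\theta_n^2 v_{m+1}^{\Delta,N}$ produces an energy identity structurally identical to \eqref{5-36}--\eqref{5-36-1}, with every $B$ replaced by $B_N$.

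For the linear dissipation term, the Abel-summation argument leading to \eqref{5-36-1} is local in $i$ whenever $|i|<N$, so it transfers with the same constant $c(\theta)/n$. The only genuinely new contribution comes from the periodic wrap at $i=\pm N$; but as soon as $N\geq 2n$ we have $\theta(\pm N/n)=1$, so the discrete jumps $\theta^2((\pm N{+}1)/n)-\theta^2((\pm N)/n)$ encoded by the periodic identification vanish and no boundary term is produced. (If $N<2n$, the target sum $\sum_{I<|i|\leq N}$ is empty once $I\geq N$, so the conclusion is trivial.) Combining this with the $N$-uniform moment bound of Lemma \ref{lemma4.1} reproduces the $\Delta c(\theta)(1+\|u_0^{\Delta,N}\|^2)/n$ and $\Delta c(\theta)(1+\|v_0^{\Delta,N}\|^2)/n$ estimates of \eqref{5-37}--\eqref{5-37-1}.

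The nonlinear reaction part cancels pointwise in $i$ via \eqref{2.2} after multiplication by $\theta_n^2$, and Young's inequality applied to the forcing, noise, and cross terms then produces the tail sums $\sum_{|i|>n}|f_i|^2$, $\sum_{|i|>n}|g_i|^2$, $\sum_{|i|>n}|h_i|^2$, $\sum_{|i|>n}|\delta_i|^2$, each bounded independently of $N$ and tending to zero as $n\to\infty$ since $f,g,h,\delta\in\ell^2$. Using \eqref{5-29} and \eqref{5-32}, the one-step recursion contracts by the factor $1-\tfrac{\lambda}{4}\Delta$, and an induction on $m$ (exactly parallel to the passage \eqref{5-33}--\eqref{5-41}) yields
\begin{equation*}
\mathbb{E}\bigl[b_2\|\theta_n u_m^{\Delta,N}\|^2+b_1\|\theta_n v_m^{\Delta,N}\|^2\bigr]\leq \|\psi_0^{\Delta,N}\|_X^2\, e^{m\ln(1-\lambda\Delta/4)}+\eta.
\end{equation*}
Choosing $n=n(\eta,B)$ large enough that the cumulative tails lie below $\eta$, and setting $I=2n$ (still independent of $N$), the identity $\theta(i/n)=1$ for $|i|\geq 2n=I$ gives $\sum_{I<|i|\leq N}(b_2|u_{m,i}^{\Delta,N}|^2+b_1|v_{m,i}^{\Delta,N}|^2)\leq b_2\|\theta_n u_m^{\Delta,N}\|^2+b_1\|\theta_n v_m^{\Delta,N}\|^2$, which delivers the claimed inequality.

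The main obstacle is the periodic-boundary bookkeeping for $B_N$: one must verify that the cross-difference Abel-summation manipulation does not pick up an extra boundary contribution from the wrap-around identification. The case split $N\geq 2n$ versus $N<2n$ disposes of this cleanly, after which every remaining step is a faithful transcription of the proof of Lemma \ref{lemma2.5}, which is precisely why the authors chose to omit the full details.
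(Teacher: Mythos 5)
Your proposal is correct and follows exactly the route the paper intends: the authors omit this proof precisely because it is the cutoff-function argument of Lemma \ref{lemma2.5} transplanted to the truncated system, with Lemma \ref{lemma4.1} supplying the $N$-uniform moment bound. Your treatment of the periodic wrap is sound (and in fact the boundary jump vanishes for every $N$, since $\theta$ acts on $|i|/n$ and $\theta(|N|/n)=\theta(|-N|/n)$, so the $i=\pm N$ terms contribute a nonpositive quantity just like the interior ones); the rest is a faithful transcription.
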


Using Lemma \ref{lemma4.1}, Lemma \ref{lemma4.2}, and the classical Krylov-Bogolyubov method, we establish the existence of invariant measures for the finite dimensional truncated system \eqref{5-73} in $\mathbb{R}^{2N+1}\times \mathbb{R}^{2N+1}$.

\begin{theorem}\label{theorem4.1} Suppose \eqref{5-10}, \eqref{5-11} and \eqref{5-29} hold. Then, there exists a numerical invariant measure for the finite dimensional truncated system \eqref{5-73} in $R^{2N+1}\times R^{2N+1}$.
\end{theorem}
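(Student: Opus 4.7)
The plan is to mimic the argument of Theorem~\ref{theorem2.1}, adapting the Krylov--Bogolyubov procedure to the finite dimensional setting $\mathbb{R}^{2N+1}\times\mathbb{R}^{2N+1}$. First I would observe that the scheme \eqref{5-73} defines, for fixed $\Delta$ and $N$, a one-step implicit recursion whose solvability follows exactly as in Lemma~\ref{lemma2.2} (the operator associated with the left-hand side of \eqref{5-73} is strictly monotone and coercive on $\mathbb{R}^{2N+1}\times\mathbb{R}^{2N+1}$ under the weighted inner product $\langle\cdot,\cdot\rangle$, since all the algebraic identities used there---in particular \eqref{2.2} and the nonnegativity of $(A_N u,u)$---remain valid for the periodic operator $A_N=B_N^* B_N$). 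Consequently $\{\psi_m^{\Delta,N}\}_{m\in\mathbb{N}_0}$ is a well-defined time-homogeneous Markov chain, and the continuity of the one-step update in the initial data together with the Lipschitz continuity of $\sigma_N$ implies the Feller property (this is the exact analogue of the citation of Theorem~2.7 in \cite{Liu-2015} used before Lemma~\ref{lemma2.6}).

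Next I would establish tightness of the laws of $\{\psi_m^{\Delta,N}\}_{m\in\mathbb{N}_0}$ in $\mathbb{R}^{2N+1}\times\mathbb{R}^{2N+1}$. By Lemma~\ref{lemma4.1}, for any fixed initial datum $\psi_0^{\Delta,N}$ one has
\begin{equation*}
\sup_{m\in\mathbb{N}_0}\mathbb{E}\left(\|\psi_m^{\Delta,N}\|_X^2\right)\leq \|\psi_0^{\Delta,N}\|_X^2 + M,
\end{equation*}
with $M$ independent of $m$. Because closed balls in the finite dimensional space $\mathbb{R}^{2N+1}\times\mathbb{R}^{2N+1}$ are compact, Chebyshev's inequality immediately yields tightness: for every $\eta>0$ one can choose $R=R(\eta)$ so that the ball $\{\psi:\|\psi\|_X\leq R\}$ carries mass at least $1-\eta$ uniformly in $m$. (Lemma~\ref{lemma4.2} is actually not needed for this tightness step, although it is consistent with the infinite dimensional picture.)

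Then I would invoke the Krylov--Bogolyubov construction exactly as in the proof of Theorem~\ref{theorem2.1}. Fix $z\in\mathbb{R}^{2N+1}\times\mathbb{R}^{2N+1}$ and denote by $P^{\Delta,N}(0,z;t_i,\cdot)$ the transition probabilities of the chain \eqref{5-73}. Set
\begin{equation*}
\mu_n^{\Delta,N}=\frac{1}{n}\sum_{i=0}^{n}P^{\Delta,N}(0,z;t_i,\cdot),\qquad t_i=i\Delta.
\end{equation*}
By tightness and Prokhorov's theorem, a subsequence converges weakly to some probability measure $\mu^{\Delta,N}$. Using the Feller property and passing to the limit in the identity
\begin{equation*}
\int \phi(x)\,\mu_n^{\Delta,N}(dx)-\int\!\!\int \phi(x)P^{\Delta,N}(0,y;t_1,dx)\,\mu_n^{\Delta,N}(dy) = \frac{1}{n}\bigl(\phi(z)-\textstyle\int \phi\,dP^{\Delta,N}(0,z;t_{n+1},\cdot)\bigr)
\end{equation*}
for $\phi\in C_b(\mathbb{R}^{2N+1}\times\mathbb{R}^{2N+1})$ shows that $\mu^{\Delta,N}$ is invariant for the one-step transition, hence for every $t_i$.

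I expect no substantive obstacle here: the finite dimensional setting replaces the delicate tail-end estimate (which was the real difficulty in Theorem~\ref{theorem2.1}) with automatic compactness of bounded sets, so the only care required is to check that the monotone-coercive solvability argument of Lemma~\ref{lemma2.2} transfers verbatim to $A_N$, $B_N$, $B_N^*$ under periodic boundary conditions. This is immediate from $A_N=B_N^* B_N$ and the nonnegativity $(A_N u,u)=\|B_N u\|^2\geq 0$.
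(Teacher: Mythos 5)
Your proposal is correct and follows essentially the same route as the paper, which simply invokes Lemma~\ref{lemma4.1}, Lemma~\ref{lemma4.2}, and the Krylov--Bogolyubov method without writing out details. Your observation that the tail estimate of Lemma~\ref{lemma4.2} is superfluous in the finite dimensional setting (compactness of closed balls plus the uniform second moment bound of Lemma~\ref{lemma4.1} already gives tightness) is a valid and slightly cleaner reading than the paper's citation of both lemmas.
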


To establish the convergence of the numerical invariant measures for the finite dimensional truncated system, we first present the following conclusion.

\begin{lemma}\label{lemma4.3} Suppose \eqref{5-10}, \eqref{5-11} hold and $0<\Delta<\Delta^{*}$. Then, for each compact set $K\in X$, $m\in \mathbb{N}$ and $\eta>0$, the following holds:
\begin{equation}
\lim_{N\rightarrow\infty}\sup_{\psi_0\in K\bigcap R^{2N+1}\times R^{2N+1}}  \mathbf{P}\left(\| \psi_m^{\Delta}(\psi_0) - \psi_m^{\Delta,N}(\psi_0)\|_X\geq \eta \right) = 0. 
\end{equation}
\end{lemma}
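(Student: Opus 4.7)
The plan is to control the error $e_m := \psi_m^\Delta - \psi_m^{\Delta,N}$ (identifying the $(2N+1)$-dimensional sequence with its zero-extension to $X$) by combining a recursive energy estimate with the tail bounds from Lemma \ref{lemma2.5} and Lemma \ref{lemma4.2}, and then converting the resulting moment bound into the required probability statement by Chebyshev's inequality. Subtracting \eqref{5-73} from \eqref{2-1.1}, the linear part splits into $A(u_{m+1}^\Delta - u_{m+1}^{\Delta,N})$, which is handled by the same coercivity as in Lemma \ref{lemma2.2} and produces the factor $(1+\lambda\Delta)$ on the left, plus an ``operator mismatch'' $(A-A_N)u_{m+1}^{\Delta,N}$ that is supported only on indices near $\pm N$ and is therefore controlled by the tail of $\psi_{m+1}^{\Delta,N}$. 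The source and noise discrepancies $f-f_N$, $g-g_N$, $h-h_N$, $\sigma-\sigma_N$ produce only tails of the $\ell^2$ sequences $f,g,h,\delta$, which vanish as $N\to\infty$.

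Next I take the inner product of the difference equations with $(b_2 e^u_{m+1},\, b_1 e^v_{m+1})$ in $X$. The nonlinear terms $F$ and $G$ are only locally Lipschitz (see \eqref{2.4}--\eqref{2.5}), so I localize on the event
\begin{equation*}
\Omega_{R,m}=\Bigl\{\max_{0\le k\le m+1}\bigl(\|\psi_k^\Delta\|_X+\|\psi_k^{\Delta,N}\|_X\bigr)\le R\Bigr\}.
\end{equation*}
By Lemmas \ref{lemma2.4} and \ref{lemma4.1} combined with Chebyshev, given any $\varepsilon>0$ I can choose $R$ (independent of $N$ and of $\psi_0\in K$) so that $\mathbf{P}(\Omega_{R,m}^c)\le \varepsilon/2$. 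On $\Omega_{R,m}$ the local Lipschitz constants $c_1(R),c_2(R)$, $L_\sigma$ and Young's inequality give a recursive estimate of the form
\begin{equation*}
\mathbb{E}\bigl(\mathbf{1}_{\Omega_{R,m}}\|e_{m+1}\|_X^2\bigr)\le (1+C_R\Delta)\,\mathbb{E}\bigl(\mathbf{1}_{\Omega_{R,m}}\|e_m\|_X^2\bigr)+\Delta\,\tau_N(R,m),
\end{equation*}
where $\tau_N(R,m)$ collects the operator mismatch and the $\ell^2$-tails of $f,g,h,\delta$; by Lemma \ref{lemma2.5} and the summability of these sequences, $\tau_N(R,m)\to 0$ as $N\to\infty$.

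Two points need extra care. The initial error $e_0=0$ since $\psi_0\in K\cap(\mathbb{R}^{2N+1}\times\mathbb{R}^{2N+1})$, which removes one term; and the compactness of $K$ is used to obtain that $\sum_{|i|>N}(|u_{0,i}|^2+|v_{0,i}|^2)\to 0$ uniformly for $\psi_0\in K$, which is needed when the zero-extension convention is applied to initial data (and, via Lemma \ref{lemma2.5}, at later steps as well). Iterating the one-step estimate $m$ times yields $\mathbb{E}(\mathbf{1}_{\Omega_{R,m}}\|e_m\|_X^2)\le (1+C_R\Delta)^m\,m\Delta\,\tau_N(R,m)\to 0$ as $N\to\infty$, and Chebyshev then gives $\mathbf{P}(\{\|e_m\|_X\ge\eta\}\cap\Omega_{R,m})\le \varepsilon/2$ for $N$ large, uniformly in $\psi_0\in K\cap(\mathbb{R}^{2N+1}\times\mathbb{R}^{2N+1})$.

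The main obstacle will be the careful bookkeeping of the nonlinear contributions: because $F$ and $G$ are only locally Lipschitz, the localization on $\Omega_{R,m}$ is essential, and the inner products $(F(u_{m+1}^\Delta,v_{m+1}^\Delta)-F(u_{m+1}^{\Delta,N},v_{m+1}^{\Delta,N}),\,e^u_{m+1})$ and the analogous $G$-term must be split into a part absorbed by the $(1+\lambda\Delta)$ coercive factor and a part bounded by $C_R\|e_{m+1}\|_X^2$, with $\Delta$ chosen (uniformly in $N$) so that the bad coefficient stays controlled. Everything else — the wrap-around produced by the periodic boundary conditions at $\pm N$ and the $\ell^2$-tails of the data — is harmless because these quantities are already known to vanish as $N\to\infty$ via Lemma \ref{lemma2.5} and the given assumptions on $f,g,h,\delta$.
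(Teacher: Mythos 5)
Your skeleton coincides with the paper's: subtract \eqref{5-73} from \eqref{2-1.1}, pair the error equation with $(b_2 e^u_{m+1}, b_1 e^v_{m+1})$, control the operator mismatch $(A-A_N)u^{\Delta}_{m+1}$ by the tail $\sum_{|i|>N}((Au^{\Delta}_{m+1})_i)^2$ via Lemma \ref{lemma2.5}, absorb the $\ell^2$-tails of $f,g,h,\delta$, iterate the one-step recursion from $e_0=0$, and finish with Chebyshev. The genuine divergence is in the treatment of the nonlinearities. The paper's estimate \eqref{5-79} simply contains no contribution from $F$ and $G$: the authors implicitly discard the combination $b_2b_1(F_1-F_2,e^u)-b_2^2(G_1-G_2,e^u)-b_1^2(F_1-F_2,e^v)+b_1b_2(G_1-G_2,e^v)$ on the strength of the pointwise identity \eqref{2.2}. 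That identity gives a sign only for the undifferenced products; for differences the componentwise expression reduces to $-(w_1-w_2)\,(u_1^{2p}w_1-u_2^{2p}w_2)$ with $w=b_2u-b_1v$, which contains the non-sign-definite piece $-w_2(u_1^{2p}-u_2^{2p})(w_1-w_2)$, so the cancellation is not automatic. Your route --- localizing on $\Omega_{R,m}$ where both $\psi^\Delta_k$ and $\psi^{\Delta,N}_k$ are bounded by $R$, invoking the local Lipschitz bounds \eqref{2.4}--\eqref{2.5} with constant $c(R)$, and paying for this with the factor $(1+C_R\Delta)^m$ (harmless for fixed $m,\Delta$) and the extra event $\mathbf{P}(\Omega_{R,m}^c)\le\varepsilon/2$ supplied uniformly by Lemmas \ref{lemma2.4} and \ref{lemma4.1} --- is therefore the more defensible argument, at the cost of a dependence $C_R$ that the paper's bound does not carry.

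One technical caveat for the write-up: $\Omega_{R,m}$ involves $\psi_{m+1}$ and is thus $\mathcal{F}_{(m+1)\Delta}$-measurable, so you cannot use the martingale identity $\mathbb{E}[(e_m,\,(\cdot)\Delta W_m)]=0$ after inserting the indicator. This is not fatal --- the paper itself never uses that identity here, handling every stochastic term by Young's inequality and $\mathbb{E}|\Delta W_m|^2=\Delta$ as in \eqref{5-78} --- but you should follow the same pattern (or replace the indicator by a stopping time $\tau_R$ and estimate $\mathbb{E}\|e_{m\wedge\tau_R}\|_X^2$) rather than appeal to adaptedness. With that adjustment, your argument closes, and your observation that $e_0=0$ for $\psi_0\in K\cap(\mathbb{R}^{2N+1}\times\mathbb{R}^{2N+1})$ is correct and slightly cleaner than the paper's separate initial-tail estimate \eqref{5-82}.
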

\begin{proof}
Denote
\begin{equation*}
\sigma_{N^{*}}(0) = \left(\sigma_{i}^{*}(0)\right)_{i\in \mathbb{Z}} = \left\{\begin{array}{l}
         0,~~~~~~~~~~|i|\leq N,
         \vspace{2.0ex}\\
         \sigma_{i}(0),~~~~|i|> N.
 \end{array}\right.
\end{equation*}
From \eqref{2-1.1} and \eqref{5-73}, it follows that
\begin{equation}\label{5-74}
\left\{\begin{array}{l}
         u_{m+1}^{\Delta} - u_{m+1}^{\Delta,N} =  u_{m}^{\Delta} - u_{m}^{\Delta,N} - d_{1}\Delta \left(Au_{m+1}^{\Delta} - A_Nu_{m+1}^{\Delta,N}\right) - a_{1}\Delta \left(u_{m+1}^{\Delta} - u_{m+1}^{\Delta,N}\right) \\
 \hspace{13.0ex} + b_{1}\Delta \left(F(u_{m+1}^{\Delta},v_{m+1}^{\Delta}) - F(u_{m+1}^{\Delta,N},v_{m+1}^{\Delta,N}) \right) - b_{2}\Delta \left(G(u_{m+1}^{\Delta}) - G(u_{m+1}^{\Delta,N})\right) \\
 \hspace{13.0ex} + \left( f - f_N \right) \Delta + \left[(h - h_N) + \left( \sigma( u_{m}^{\Delta}) - \sigma_N( u_{m}^{\Delta,N})\right)\right]\Delta W_{m},
         \vspace{2.0ex}\\
         v_{m+1}^{\Delta} - v_{m+1}^{\Delta,N} = v_{m}^{\Delta} - v_{m}^{\Delta,N} - d_{2}\Delta \left(Av_{m+1}^{\Delta} - A_Nv_{m+1}^{\Delta,N}\right) - a_{2}\Delta \left(v_{m+1}^{\Delta} - v_{m+1}^{\Delta,N}\right)\\
 \hspace{13.0ex} - b_{1}\Delta \left(F(u_{m+1}^{\Delta},v_{m+1}^{\Delta}) - F(u_{m+1}^{\Delta,N},v_{m+1}^{\Delta,N}) \right) + b_{2}\Delta \left(G(u_{m+1}^{\Delta}) - G(u_{m+1}^{\Delta,N})\right) \\
\hspace{13.0ex} + \left( g - g_N \right) \Delta  + \left[(h - h_N) + \left( \sigma( v_{m}^{\Delta}) - \sigma_N( v_{m}^{\Delta,N})\right)\right]\Delta W_{m}.
 \end{array}\right.
\end{equation}
Taking the inner product of \eqref{5-74} with $\left(b_2  \left(u_{m+1}^{\Delta} - u_{m+1}^{\Delta,N}\right),  b_1\left(v_{m+1}^{\Delta} - v_{m+1}^{\Delta,N}\right)\right)$ in $X$, we obtain
\begin{equation}\label{5-75}
\begin{aligned}
b_{2} \|u_{m+1}^{\Delta} - u_{m+1}^{\Delta,N}\|^2 = & b_{2} \left(u_{m}^{\Delta} - u_{m}^{\Delta,N}, u_{m+1}^{\Delta} - u_{m+1}^{\Delta,N}\right) - b_{2}d_{1}\Delta \left(Au_{m+1}^{\Delta} - A_{N}u_{m+1}^{\Delta,N}, u_{m+1}^{\Delta} - u_{m+1}^{\Delta,N}\right)\\
& - b_{2}a_{1}\Delta \|u_{m+1}^{\Delta} - u_{m+1}^{\Delta,N}\|^2 + b_{2}b_{1}\Delta \left(F(u_{m+1}^{\Delta},v_{m+1}^{\Delta}) - F(u_{m+1}^{\Delta,N},v_{m+1}^{\Delta,N}), u_{m+1}^{\Delta} - u_{m+1}^{\Delta,N}\right)\\
& - b_{2}^2\Delta \left(G(u_{m+1}^{\Delta}) - G(u_{m+1}^{\Delta,N}), u_{m+1}^{\Delta} - u_{m+1}^{\Delta,N}\right)  + b_{2}\Delta \left(f - f_N, u_{m+1}^{\Delta} - u_{m+1}^{\Delta,N} \right) \\
& + b_{2}\left(h - h_N, u_{m+1}^{\Delta} - u_{m+1}^{\Delta,N}\right)\Delta W_{m} + b_{2}\left( \sigma( u_{m}^{\Delta}) - \sigma_N( u_{m}^{\Delta,N}), u_{m+1}^{\Delta} - u_{m+1}^{\Delta,N}\right)\Delta W_{m}\\
\leq & b_{2} \frac{\lambda\Delta}{8} \|u_{m+1}^{\Delta} - u_{m+1}^{\Delta,N}\|^2 + b_{2}\frac{2}{\lambda\Delta}\|u_{m}^{\Delta} - u_{m}^{\Delta,N}\|^2 - b_{2}d_{1}\Delta \left(Au_{m+1}^{\Delta} - A_{N}u_{m+1}^{\Delta,N}, u_{m+1}^{\Delta} - u_{m+1}^{\Delta,N}\right)\\
& - b_{2}\lambda\Delta \|u_{m+1}^{\Delta} - u_{m+1}^{\Delta,N}\|^2 + b_{2}b_{1}\Delta \left(F(u_{m+1}^{\Delta},v_{m+1}^{\Delta}) - F(u_{m+1}^{\Delta,N},v_{m+1}^{\Delta,N}), u_{m+1}^{\Delta} - u_{m+1}^{\Delta,N}\right)\\
& - b_{2}^2\Delta \left(G(u_{m+1}^{\Delta}) - G(u_{m+1}^{\Delta,N}), u_{m+1}^{\Delta} - u_{m+1}^{\Delta,N}\right)  + b_{2}\Delta\frac{\lambda}{8} \|u_{m+1}^{\Delta} - u_{m+1}^{\Delta,N}\|^2 + b_{2}\Delta\frac{2}{\lambda} \|f - f_N\|^2 \\
& + b_{2}\left(h - h_N, u_{m+1}^{\Delta} - u_{m+1}^{\Delta,N}\right)\Delta W_{m} + b_{2}\left( \sigma( u_{m}^{\Delta}) - \sigma( u_{m}^{\Delta,N}), u_{m+1}^{\Delta} - u_{m+1}^{\Delta,N}\right)\Delta W_{m} \\
& + b_{2}\left(\sigma_{N^*}(0), u_{m+1}^{\Delta} - u_{m+1}^{\Delta,N}\right)\Delta W_{m}.
\end{aligned}
\end{equation}
With the same method, we get
\begin{equation}\label{5-75-1}
\begin{aligned}
b_{1} \|v_{m+1}^{\Delta} - v_{m+1}^{\Delta,N}\|^2 \leq & b_{1} \frac{\lambda\Delta}{8} \|v_{m+1}^{\Delta} - v_{m+1}^{\Delta,N}\|^2 + b_{1}\frac{2}{\lambda\Delta}\|v_{m}^{\Delta} - v_{m}^{\Delta,N}\|^2 - b_{1}d_{2}\Delta \left(Av_{m+1}^{\Delta} - A_{N}v_{m+1}^{\Delta,N}, v_{m+1}^{\Delta} - v_{m+1}^{\Delta,N}\right)\\
& - b_{1}\lambda\Delta \|v_{m+1}^{\Delta} - v_{m+1}^{\Delta,N}\|^2 - b_{1}^2\Delta \left(F(u_{m+1}^{\Delta},v_{m+1}^{\Delta}) - F(u_{m+1}^{\Delta,N},v_{m+1}^{\Delta,N}), v_{m+1}^{\Delta} - v_{m+1}^{\Delta,N}\right)\\
& + b_{1}b_{2}\Delta \left(G(u_{m+1}^{\Delta}) - G(u_{m+1}^{\Delta,N}), v_{m+1}^{\Delta} - v_{m+1}^{\Delta,N}\right)  + b_{1}\Delta\frac{\lambda}{8} \|v_{m+1}^{\Delta} - v_{m+1}^{\Delta,N}\|^2 + b_{1}\Delta\frac{2}{\lambda} \|g - g_N\|^2 \\
& + b_{1}\left(h - h_N, v_{m+1}^{\Delta} - v_{m+1}^{\Delta,N}\right)\Delta W_{m} + b_{1}\left( \sigma( v_{m}^{\Delta}) - \sigma( v_{m}^{\Delta,N}), v_{m+1}^{\Delta} - v_{m+1}^{\Delta,N}\right)\Delta W_{m} \\
& + b_{1}\left(\sigma_{N^*}(0), v_{m+1}^{\Delta} - v_{m+1}^{\Delta,N}\right)\Delta W_{m}.
\end{aligned}
\end{equation}
For the third term on the right hand side of $\eqref{5-75}$, by Young's inequality, we have
\begin{equation}\label{5-76}
\begin{aligned}
& - b_{2}d_{1}\Delta \left(Au_{m+1}^{\Delta} - A_Nu_{m+1}^{\Delta,N}, u_{m+1}^{\Delta} - u_{m+1}^{\Delta,N}\right) \\
= & - b_{2}d_{1}\Delta \left(Au_{m+1}^{\Delta} - A_{N}u_{m+1}^{\Delta}, u_{m+1}^{\Delta} - u_{m+1}^{\Delta,N}\right) - b_{2}d_{1}\Delta \left(A_{N}u_{m+1}^{\Delta} - A_{N}u_{m+1}^{\Delta,N}, u_{m+1}^{\Delta} - u_{m+1}^{\Delta,N}\right) \\
\leq & b_{2} d_{1}^{2}\Delta\frac{2}{\lambda} \|Au_{m+1}^{\Delta} - A_{N}u_{m+1}^{\Delta}\|^2 + b_{2}\Delta\frac{\lambda}{8}\|u_{m+1}^{\Delta} - u_{m+1}^{\Delta,N}\|^2 \\
= & b_{2} d_{1}^{2}\Delta\frac{2}{\lambda}  \sum_{|i|>N}\left( (Au_{m+1}^{\Delta})_{i} \right)^{2} +  b_{2}\Delta\frac{\lambda}{8}\|u_{m+1}^{\Delta} - u_{m+1}^{\Delta,N}\|^2.
\end{aligned}
\end{equation}
Using the same method, we obtain
\begin{equation}\label{5-76-1}
- b_{1}d_{2}\Delta \left(Av_{m+1}^{\Delta} - A_Nv_{m+1}^{\Delta,N}, v_{m+1}^{\Delta} - v_{m+1}^{\Delta,N}\right) 
\leq  b_{1} d_{2}^{2}\Delta\frac{2}{\lambda}  \sum_{|i|>N}\left( (Av_{m+1}^{\Delta})_{i} \right)^{2} +  b_{1}\Delta\frac{\lambda}{8}\|v_{m+1}^{\Delta} - v_{m+1}^{\Delta,N}\|^2.
\end{equation}
For the last three terms on the right hand side of $\eqref{5-75}$, by $\eqref{5-10}$, we have
\begin{equation}\label{5-78}
\begin{aligned}
& b_{2}\left(h - h^N, u_{m+1}^{\Delta} - u_{m+1}^{\Delta,N}\right)\Delta W_{m} + b_{2}\left( \sigma( u_{m}^{\Delta}) - \sigma( u_{m}^{\Delta,N}), u_{m+1}^{\Delta} - u_{m+1}^{\Delta,N}\right)\Delta W_{m} \\
& ~~~ + b_{2}\left(\sigma_{N^*}(0), u_{m+1}^{\Delta} - u_{m+1}^{\Delta,N}\right)\Delta W_{m}\\
& \leq   b_{2}\frac{2}{\lambda\Delta} \|\sigma( u_{m}^{\Delta}) - \sigma( u_{m}^{\Delta,N})\|^2 |\Delta W_{m}|^2 + \frac{3}{8}b_{2}\lambda\Delta \|u_{m+1}^{\Delta} - u_{m+1}^{\Delta,N}\|^2 \\
& ~~~ + b_{2}\frac{4}{\lambda\Delta} \sum_{|i|>N}|\delta_{i}|^2|\Delta W_{m}|^2 + b_{2}\frac{2}{\lambda\Delta} \sum_{|i|>N}|h_{i}|^2|\Delta W_{m}|^2 \\
& \leq  b_{2}\frac{2}{\lambda\Delta} L_{\sigma}\| u_{m}^{\Delta} - u_{m}^{\Delta,N}\|^2 |\Delta W_{m}|^2 + \frac{3}{8}b_{2}\lambda\Delta \|u_{m+1}^{\Delta} - u_{m+1}^{\Delta,N}\|^2 \\
& ~~~ + b_{2}\frac{4}{\lambda\Delta} \sum_{|i|>N}|\delta_{i}|^2|\Delta W_{m}|^2 + b_{2}\frac{2}{\lambda\Delta} \sum_{|i|>N}|h_{i}|^2|\Delta W_{m}|^2.
\end{aligned}
\end{equation}
Using the same method, we obtain
\begin{equation}\label{5-78-1}
\begin{aligned}
& b_{1}\left(h - h^N, v_{m+1}^{\Delta} - v_{m+1}^{\Delta,N}\right)\Delta W_{m} + b_{1}\left( \sigma( v_{m}^{\Delta}) - \sigma( v_{m}^{\Delta,N}), v_{m+1}^{\Delta} - v_{m+1}^{\Delta,N}\right)\Delta W_{m} \\
& ~~~ + b_{1}\left(\sigma_{N^*}(0), v_{m+1}^{\Delta} - v_{m+1}^{\Delta,N}\right)\Delta W_{m} \\
& \leq  b_{1}\frac{2}{\lambda\Delta} L_{\sigma}\| v_{m}^{\Delta} - v_{m}^{\Delta,N}\|^2 |\Delta W_{m}|^2 + \frac{3}{8}b_{1}\lambda\Delta \|v_{m+1}^{\Delta} - v_{m+1}^{\Delta,N}\|^2 \\
& ~~~ + b_{1}\frac{4}{\lambda\Delta} \sum_{|i|>N}|\delta_{i}|^2|\Delta W_{m}|^2 + b_{1}\frac{2}{\lambda\Delta} \sum_{|i|>N}|h_{i}|^2|\Delta W_{m}|^2.
\end{aligned}
\end{equation}
Combining $\eqref{5-75}$-$\eqref{5-78-1}$ and taking the expectation, we obtain
\begin{equation}\label{5-79}
\begin{aligned}
& \mathbb{E}\left(b_2\|u_{m+1}^{\Delta} - u_{m+1}^{\Delta,N}\|^2  + b_1\|v_{m+1}^{\Delta} - v_{m+1}^{\Delta,N}\|^2 \right)\\
\leq & \left(1 + \frac{\lambda\Delta}{4} \right)^{-1}\left( \frac{2}{\lambda\Delta} + \frac{2}{\lambda}L_{\sigma} \right) \left( b_{2}\|u_{m}^{\Delta} - u_{m}^{\Delta,N}\|^2 + b_{1}\|v_{m}^{\Delta} - v_{m}^{\Delta,N}\|^2 \right) \\
& + \frac{2}{\lambda}\Delta\left(1 + \frac{\lambda\Delta}{4} \right)^{-1}\left[\mathbb{E} \left( b_{2} d_{1}^{2} \sum_{|i|>N}\left( (Au_{m+1}^{\Delta})_{i} \right)^{2} + b_{1} d_{2}^{2} \sum_{|i|>N}\left( (Av_{m+1}^{\Delta})_{i} \right)^{2}  \right) \right]\\
&+ \frac{2}{\lambda} \left( 1 + \frac{\lambda\Delta}{4} \right)^{-1} \left[ b_{2}\Delta \sum_{|i|>N} |f_i|^2  + b_{1}\Delta  \sum_{|i|>N} |g_i|^2 + 2(b_1+b_2) \sum_{|i|>N}|\delta_{i}|^2 +  (b_1 + b_2)\sum_{|i|>N}|h_{i}|^2 \right].
\end{aligned}
\end{equation}
By applying Lemma \ref{lemma2.5}, for any $\eta>0$ and compact set $K\subseteq X$, there exists $I_1=I_1(\eta, K)>0$ such that for any $N>I_1$, 
\begin{equation}\label{5-80}
\frac{2}{\lambda}\Delta\left(1 + \frac{\lambda\Delta}{4} \right)^{-1}\left[\mathbb{E} \left( b_{2} d_{1}^{2} \sum_{|i|>N}\left( (Au_{m+1}^{\Delta})_{i} \right)^{2} + b_{1} d_{2}^{2} \sum_{|i|>N}\left( (Av_{m+1}^{\Delta})_{i} \right)^{2}  \right) \right] < \frac{\eta}{2}.
\end{equation}
Since $f, g, h,\delta\in \ell^2$, there exists $I_2=I(\eta) > I_1$ such that for any $N>I_2$, the following holds:
\begin{equation}\label{5-81}
\frac{2}{\lambda} \left( 1 + \frac{\lambda\Delta}{4} \right)^{-1} \left[ b_{2}\Delta \sum_{|i|>N} |f_i|^2  + b_{1}\Delta \sum_{|i|>N} |g_i|^2 + 2(b_1+b_2) \sum_{|i|>N}\delta_{i}^2 +  (b_1 + b_2)\sum_{|i|>N}h_{i}^2 \right] < \frac{\eta}{2}.
\end{equation}
Moreover, there exists $I_3=I_3(\eta, K) > 0$ such that for any $N>I_3$ and $\psi_0=(u_0, v_0)^T\in K$, 
\begin{equation}\label{5-82}
\left( \sum_{|i|>N} b_2|u_{0,i}|^2 + b_1|v_{0,i}|^2 \right) < \eta. 
\end{equation}
By substituting $\eqref{5-80}$-$\eqref{5-82}$ into $\eqref{5-79}$, for $N>I_3$ and $\psi_0=(u_0, v_0)^T\in K$, we have
\begin{equation*}
\begin{aligned}
& \mathbb{E}\left(b_2\|u_{m+1}^{\Delta} - u_{m+1}^{\Delta,N}\|^2  + b_1\|v_{m+1}^{\Delta} - v_{m+1}^{\Delta,N}\|^2 \right)\\
\leq & \eta \left[ \left(1 + \frac{\lambda\Delta}{4} \right)^{-1}\left( \frac{2}{\lambda\Delta} + \frac{2}{\lambda}L_{\sigma} \right) \right]^{m+1} + \eta \sum_{l=0}^{m} \left[ \left(1 + \frac{\lambda\Delta}{4} \right)^{-1}\left( \frac{2}{\lambda\Delta} + \frac{2}{\lambda}L_{\sigma} \right) \right]^{l}\\
\leq & \eta C.
\end{aligned}
\end{equation*}
By applying Markov's inequality, the lemma is proven.
\end{proof}

Let $S^{\Delta,N}$ denote the set of all invariant measures associated with $\psi_m^{\Delta,N}$. According to Theorem \ref{theorem4.1}, for each $\Delta\in (0,\Delta^*)$ and $N\in \mathbb{N}$, the set $S^{\Delta,N}$ is nonempty under the conditions \eqref{5-10}, \eqref{5-11} and \eqref{5-29}.
  
By applying Lemma \ref{lemma4.1} and Lemma \ref{lemma4.2}, and using a proof technique similar to the first part of Theorem \ref{theorem3.2}, we have the following result.
 
\begin{lemma}\label{lemma4.4} Suppose \eqref{5-10}, \eqref{5-11} and \eqref{5-29} hold. Then the set $\bigcup_{N\in \mathbb{N}}S^{\Delta, N}$ is compact in $\left( P(X),~ d_{P(X)} \right)$.
\end{lemma}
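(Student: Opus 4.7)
The plan is to follow the structure of part (i) in the proof of Theorem \ref{theorem3.2}, substituting Lemmas \ref{lemma4.1} and \ref{lemma4.2} for Lemmas \ref{lemma2.4} and \ref{lemma2.5}. Since $(P(X),d_{P(X)})$ metrizes weak convergence and $X$ is Polish, by Prokhorov's theorem it suffices to show that $\bigcup_{N\in\mathbb{N}}S^{\Delta,N}$ is tight in $P(X)$, exactly as was done for $\bigcup_{\Delta\in(0,\Delta^{*})}S^{\Delta}$ earlier. Throughout, the key point is that the constants produced by Lemmas \ref{lemma4.1} and \ref{lemma4.2} are independent of $N$, so the compact set we construct in $X$ can be chosen uniformly in $N$.

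Concretely, I would first invoke Lemma \ref{lemma4.1}: for any $\xi\in X$ there exist $T_1,c_1>0$ independent of $N$ with $\mathbb{E}(\|\psi_m^{\Delta,N}(\xi)\|_X^2)\leq c_1$ for all $m\geq [T_1/(-\ln(1-\tfrac{\lambda}{4}\Delta))]+1$. Chebyshev's inequality then produces $R_1=R_1(\eta)$, independent of $N$, with $\mathbb{P}(\|\psi_m^{\Delta,N}\|_X^2\geq R_1)\leq \eta/2$. Next, Lemma \ref{lemma4.2} gives, for each $k\in\mathbb{N}$, an integer $n_k=n_k(\eta,k)$ independent of $N$ such that $\mathbb{E}(\sum_{n_k<|i|\leq N}(b_2|u_{m,i}^{\Delta,N}|^2+b_1|v_{m,i}^{\Delta,N}|^2))<\eta/2^{2k}$ for all $m$ beyond a corresponding threshold. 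Chebyshev followed by a sum over $k$ yields
\begin{equation*}
\mathbb{P}\!\left(\sum_{n_k<|i|\leq N}(b_2|u_{m,i}^{\Delta,N}|^2+b_1|v_{m,i}^{\Delta,N}|^2)\leq 2^{-k}\text{ for all }k\in\mathbb{N}\right)>1-\eta/2,
\end{equation*}
mirroring \eqref{5-62}--\eqref{5-63}. Defining $\mathbf{Y}_{1,\eta},\mathbf{Y}_{2,\eta},\mathbf{Y}_{\eta}$ as in the proof of Theorem \ref{theorem3.2}, the set $\mathbf{K}_\eta:=\mathbf{Y}_\eta$ is compact in $X$ by the criterion recalled from \cite{Chen-2020}, and combining the two estimates produces $\mathbb{P}(\psi_m^{\Delta,N}(\xi)\in \mathbf{K}_\eta)>1-\eta$ for all sufficiently large $m$.

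Finally, I would exploit invariance: for any $\mu^{\Delta,N}\in S^{\Delta,N}$, the identity
\begin{equation*}
\mu^{\Delta,N}(\mathbf{K}_\eta)=\int_{X}\mathbb{P}\bigl(\psi_{m_1}^{\Delta,N}(\xi)\in \mathbf{K}_\eta\bigr)\,\mu^{\Delta,N}(d\xi)
\end{equation*}
holds for every $m_1\in\mathbb{N}$, and passing to the liminf via Fatou's lemma gives $\mu^{\Delta,N}(\mathbf{K}_\eta)\geq 1-\eta$. Since $\mathbf{K}_\eta$ is independent of $N$ and of the particular choice of invariant measure, this establishes the desired tightness, hence relative compactness, of $\bigcup_{N\in\mathbb{N}}S^{\Delta,N}$.

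The main obstacle is book-keeping: one must verify that every threshold (the index $I$ in Lemma \ref{lemma4.2}, the radius $R_1$, the truncation levels $n_k$, and the time threshold in $m$) can be chosen independently of $N$. This depends on the fact that the external forcing $f,g,h,\delta\in\ell^2$ and the noise structure give $N$-independent bounds once the norms of $f_N,g_N,h_N,\delta_N$ are controlled by those of $f,g,h,\delta$, which is immediate from the extension convention $u_i=v_i=0$ for $|i|>N$. The remainder of the argument is a direct transcription of part (i) of Theorem \ref{theorem3.2}, now with $N$ (rather than $\Delta$) as the varying parameter.
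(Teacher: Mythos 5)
Your proposal is correct and follows exactly the route the paper intends: the paper gives no written proof of Lemma \ref{lemma4.4}, stating only that it follows from Lemmas \ref{lemma4.1} and \ref{lemma4.2} by the technique of part (i) of Theorem \ref{theorem3.2}, which is precisely the argument you carry out (uniform-in-$N$ moment and tail bounds, Chebyshev, the compact set $\mathbf{Y}_\eta$, then invariance plus Fatou). The only caveat, shared with the paper itself, is that this argument yields tightness and hence relative compactness rather than compactness proper, as you candidly note.
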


\begin{lemma}\label{lemma4.5} Suppose \eqref{5-10}, \eqref{5-11}, \eqref{5-29} hold and $N_n\rightarrow \infty$. If $\mu$ is a probability measure on $X$, and $\mu^{\Delta,N_n}$ is an invariant measure of $\psi_{m}^{\Delta,N_n}$ which weakly converges to $\mu$, then $\mu$ must be an invariant measure of $\psi^\Delta_{m}$.
\end{lemma}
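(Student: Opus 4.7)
The plan is to verify directly that $\mu$ satisfies the invariance identity
\begin{equation*}
\int_X \mathbb{E}\bigl[\varphi(\psi_m^\Delta(\xi))\bigr]\,\mu(d\xi) = \int_X \varphi(\xi)\,\mu(d\xi)
\end{equation*}
for every $m\in\mathbb{N}$ and every test function $\varphi \in L_b(X)$ with $\|\varphi\|_L \leq 1$. Since the metric $d_{P(X)}$ is generated by this class, such an identity characterizes $\mu$ as an invariant measure for $\psi_m^\Delta$. The starting point is the invariance of each $\mu^{\Delta, N_n}$ for $\psi_m^{\Delta, N_n}$, viewed on $X$ through the canonical embedding $\mathbb{R}^{2N_n+1} \times \mathbb{R}^{2N_n+1} \hookrightarrow X$, which yields
\begin{equation*}
\int_X \mathbb{E}\bigl[\varphi(\psi_m^{\Delta, N_n}(\xi))\bigr]\,\mu^{\Delta, N_n}(d\xi) = \int_X \varphi(\xi)\,\mu^{\Delta, N_n}(d\xi).
\end{equation*}

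The right-hand side converges to $\int_X \varphi(\xi)\,\mu(d\xi)$ directly from the weak convergence $\mu^{\Delta, N_n} \to \mu$. For the left-hand side, I would decompose the difference from the intended limit as $I_n + II_n$, where
\begin{equation*}
I_n = \int_X \bigl(\mathbb{E}[\varphi(\psi_m^{\Delta, N_n}(\xi))] - \mathbb{E}[\varphi(\psi_m^\Delta(\xi))]\bigr)\,\mu^{\Delta, N_n}(d\xi),
\end{equation*}
\begin{equation*}
II_n = \int_X \mathbb{E}[\varphi(\psi_m^\Delta(\xi))]\,\bigl(\mu^{\Delta, N_n} - \mu\bigr)(d\xi).
\end{equation*}
The Feller property of $\psi_m^\Delta$ (noted just before Lemma \ref{lemma2.6}) guarantees that $\xi \mapsto \mathbb{E}[\varphi(\psi_m^\Delta(\xi))]$ is bounded continuous, so $II_n \to 0$ by the weak convergence of $\mu^{\Delta, N_n}$ to $\mu$.

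The main obstacle is controlling $I_n$, since Lemma \ref{lemma4.3} provides the approximation $\psi_m^{\Delta, N_n} \to \psi_m^\Delta$ in probability only uniformly over compact sets, while the integration is against a sequence of measures on the infinite-dimensional space $X$. Here I would exploit the tightness of $\{\mu^{\Delta, N_n}\}_{n}$ supplied by Lemma \ref{lemma4.4}: for any $\epsilon > 0$, choose a compact $K \subseteq X$ with $\mu^{\Delta, N_n}(K^c) < \epsilon$ for all $n$, and split the integral defining $I_n$ over $K$ and $K^c$. The tail piece is dominated by $2\|\varphi\|_\infty\,\epsilon$. On $K$, for any $\eta > 0$ the Lipschitz-plus-boundedness structure of $\varphi$ gives
\begin{equation*}
\bigl|\mathbb{E}[\varphi(\psi_m^{\Delta, N_n}(\xi)) - \varphi(\psi_m^\Delta(\xi))]\bigr| \leq \eta + 2\|\varphi\|_\infty\,\mathbf{P}\bigl(\|\psi_m^{\Delta, N_n}(\xi) - \psi_m^\Delta(\xi)\|_X \geq \eta\bigr),
\end{equation*}
and the probability is sent to zero by Lemma \ref{lemma4.3} uniformly in $\xi \in K \cap (\mathbb{R}^{2N_n+1} \times \mathbb{R}^{2N_n+1})$, which carries the full mass of $\mu^{\Delta, N_n}$ on $K$. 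Letting $n \to \infty$ and then $\eta, \epsilon \to 0$ forces $I_n \to 0$, so the left-hand side converges to $\int_X \mathbb{E}[\varphi(\psi_m^\Delta(\xi))]\,\mu(d\xi)$, matching the right-hand side and completing the verification of invariance.
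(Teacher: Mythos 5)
Your argument is correct and follows essentially the same route as the paper's proof: both start from the invariance identity for $\mu^{\Delta,N_n}$, use the tightness supplied by Lemma \ref{lemma4.4} to localize to a compact set carrying all but $\epsilon$ of the mass, and then combine the continuity of the test function with the uniform-on-compacts convergence in probability from Lemma \ref{lemma4.3} to pass to the limit. The only cosmetic differences are that you work with bounded Lipschitz test functions while the paper uses $UC_b(X)$ (both classes are measure-determining), and that you make explicit the term $II_n$ and the Feller property needed to exchange $\mu^{\Delta,N_n}$ for $\mu$, a step the paper leaves implicit in its final line.
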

\begin{proof}
Let $UC_{b}(X)$ denote the Banach space of all bounded uniformly continuous functions on $X$. Here, we only need to prove that for any $\varphi\in U C_b(X)$ and $m\in \mathbb{N}$, the following holds:
\begin{equation}\label{5-83}
\int_{X} \mathbb{E}\varphi\left( \psi_m^\Delta(x)\right)\mu (dx) = \int_{X} \varphi(x)\mu (dx).
\end{equation}
From Lemma \ref{lemma4.4}, we know that $\{\mu^{\Delta,N_n}\}$ is compact. Therefore, for any $\eta >0$, there exists a compact set $K = K(\eta)\subseteq X$ such that
\begin{equation}\label{5-84}
\mu^{\Delta,N_n}(K) \geq 1 - \eta, n\in\mathbb{N}^{+}.
\end{equation}
By \eqref{5-84}, we have
\begin{equation}\label{5-85}
\begin{aligned}
&|\int_{X} \mathbb{E}\varphi\left( \psi_m^\Delta(x)\right)\mu^{\Delta,N_{n}} (dx) - \int_{X} \varphi(x)\mu^{\Delta,N_{n}} (dx)|  \\
\leq & | \int_{K} \mathbb{E} \varphi\left( \psi_m^\Delta(x) \right)\mu^{\Delta,N_{n}}(dx) -  \int_{K} \varphi(x)\mu^{\Delta,N_{n}} (dx)| + 2\eta\sup_{x\in X}|\varphi(x)| \\
\leq & | \int_{K\bigcap R^{2N_n+1}\times R^{2N_n+1}} \mathbb{E} \varphi\left( \psi_m^\Delta(x) \right)\mu^{\Delta,N_{n}}(dx) -  \int_{K\bigcap R^{2N_n+1}\times R^{2N_n+1}} \varphi(x)\mu^{\Delta,N_{n}} (dx)| + 2\eta\sup_{x\in X}|\varphi(x)| \\
= &| \int_{K\bigcap R^{2N_n+1}\times R^{2N_n+1}} \mathbb{E} \varphi\left( \psi_m^\Delta(x) \right)\mu^{\Delta,N_{n}}(dx) -  \int_{K\bigcap R^{2N_n+1}\times R^{2N_n+1}} \mathbb{E} \varphi\left( \psi_m^{\Delta,N_n}(x) \right)\mu^{\Delta,N_{n}} (dx)| + 2\eta\sup_{x\in X}|\varphi(x)|\\
\leq & \int_{K\bigcap R^{2N_n+1}\times R^{2N_n+1}} \mathbb{E}|\varphi\left( \psi_m^\Delta(x) - \psi_m^{\Delta,N_n}(x)\right)|\mu^{\Delta,N_{n}} (dx) + 2\eta\sup_{x\in X}|\varphi(x)|.
\end{aligned}
\end{equation}
Since $\varphi\in U C_b(\mathrm{X})$, for any $\eta >0$, there exists $\epsilon>0$ such that for all $y,z\in X$ with $\|y-z\|<\epsilon$, we have $|\varphi(y)-\varphi(z)|<\eta$. Therefore, it follows that
\begin{equation}\label{5-86}
\begin{aligned}
& \int_{K\bigcap R^{2N_n+1}\times R^{2N_n+1}} \mathbb{E}|\varphi\left( \psi_m^\Delta(x)\right) - \varphi\left( \psi_m^{\Delta,N_n}(x)\right)|\mu^{\Delta,N_{n}} (dx) \\
= & \int_{K \bigcap R^{2N_n+1}\times R^{2N_n+1}} \left(\int_{\|\psi_m^\Delta(x) - \psi_m^{\Delta,N_n}(x)\| \geq \epsilon} |\varphi \left( \psi_m^\Delta(x)\right) - \varphi\left( \psi_m^{\Delta,N_n}(x)\right)| \mathbf{P}(d\omega)\right)\mu^{\Delta,N_{n}} (dx) \\
& + \int_{K \bigcap R^{2N_n+1}\times R^{2N_n+1}} \left(\int_{\|\psi_m^\Delta(x) - \psi_m^{\Delta,N_n}(x)\| < \epsilon} |\varphi \left( \psi_m^\Delta(x)\right) - \varphi\left( \psi_m^{\Delta,N_n}(x)\right)| \mathbf{P}(d\omega)\right)\mu^{\Delta,N_{n}} (dx)\\
\leq & 2\sup_{x\in X}|\varphi(x)| \sup_{x\in K \bigcap R^{2N_n+1}\times R^{2N_n+1} }\mathbf{P}\left( \|  \psi_m^\Delta(x) - \psi_m^{\Delta,N_n}(x)  \|    \geq\eta\right) + \eta,
\end{aligned}
\end{equation}
By Lemma \ref{lemma4.3} and \eqref{5-85}-\eqref{5-86}, we have 
\begin{equation}\label{5-87}
\lim_{n\rightarrow \infty} |\int_{X} \mathbb{E}\varphi\left( \psi_m^\Delta(x)\right)\mu^{\Delta,N_{n}} (dx) - \int_{X} \varphi(x)\mu^{\Delta,N_{n}} (dx)| \leq \eta +2\eta\sup_{x\in X}|\varphi(x)|.
\end{equation}
Since $\eta >0 $ is arbitrary and $\mu^{\Delta,N_n}$ weakly converges to $\mu$, then \eqref{5-83} follows from \eqref{5-87}, and $\mu$ is an invariant measure of $\psi_m^\Delta$.
\end{proof}

By applying Lemma \ref{lemma4.4} and Lemma \ref{lemma4.5}, and similar to the proof method of Theorem \ref{theorem3.2}, the following result is established.
\begin{theorem}\label{theorem4.2} Suppose \eqref{5-10}, \eqref{5-11} and \eqref{5-29} hold. Then
\begin{equation*}
\lim_{N\rightarrow \infty} d_{P(X)} \left(S^{\Delta,N},S^{\Delta}\right) = 0.
\end{equation*}
\end{theorem}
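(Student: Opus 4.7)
The plan is to follow the two-step scheme already used for Theorem~\ref{theorem3.2}, replacing the step-size limit $\Delta\to 0$ by the dimension limit $N\to\infty$, and substituting Lemma~\ref{lemma4.4} for the tightness/compactness step and Lemma~\ref{lemma4.5} for the abstract convergence result Theorem~\ref{theorem3.1}. Since both inputs are already available, the argument reduces to a soft compactness-plus-contradiction exercise in the Polish space $(P(X),d_{P(X)})$.

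Concretely, I would argue by contradiction. Suppose the claimed limit fails, so that there exist $\eta>0$ and a sequence $N_n\to\infty$ with
\begin{equation*}
d_{P(X)}\bigl(S^{\Delta,N_n},S^{\Delta}\bigr)\geq \eta, \qquad n\in\mathbb{N}.
\end{equation*}
By the definition of the Hausdorff-type distance between sets of probability measures, for each $n$ one can select $\mu^{\Delta,N_n}\in S^{\Delta,N_n}$ such that
\begin{equation*}
\mathrm{dist}\bigl(\mu^{\Delta,N_n},S^{\Delta}\bigr)\geq \eta.
\end{equation*}
Next I would invoke Lemma~\ref{lemma4.4}, which asserts that $\bigcup_{N\in\mathbb{N}}S^{\Delta,N}$ is compact in $(P(X),d_{P(X)})$. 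Extracting a (not relabelled) subsequence, we obtain $\mu^{\Delta,N_n}\to \mu^{\ast}$ in $(P(X),d_{P(X)})$, which is equivalent to weak convergence. Since $N_n\to\infty$ and each $\mu^{\Delta,N_n}$ is an invariant measure of $\psi_m^{\Delta,N_n}$, Lemma~\ref{lemma4.5} forces $\mu^{\ast}\in S^{\Delta}$. Then
\begin{equation*}
\mathrm{dist}\bigl(\mu^{\Delta,N_n},S^{\Delta}\bigr)\leq d_{P(X)}\bigl(\mu^{\Delta,N_n},\mu^{\ast}\bigr)\longrightarrow 0,
\end{equation*}
contradicting the uniform lower bound $\eta$ and completing the proof.

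The substantive work has in fact already been done upstream: Lemmas~\ref{lemma4.1} and~\ref{lemma4.2} provide the uniform-in-$N$ moment and tail estimates that drive the compactness statement in Lemma~\ref{lemma4.4}, while Lemma~\ref{lemma4.3} supplies the uniform-on-compacts convergence in probability $\psi_m^{\Delta,N}(\psi_0)\to \psi_m^{\Delta}(\psi_0)$ underpinning Lemma~\ref{lemma4.5}. Given those ingredients, Theorem~\ref{theorem4.2} itself is purely a contradiction-compactness argument, and I do not anticipate any genuine obstacle at this stage — the only thing one has to be slightly careful about is extending the selected $\mu^{\Delta,N_n}$, which a priori lives on $\mathbb{R}^{2N_n+1}\times\mathbb{R}^{2N_n+1}$, to a probability measure on all of $X$ (by setting all coordinates $|i|>N_n$ to zero, exactly as done in Section~\ref{sec:4}), so that the weak convergence in $(P(X),d_{P(X)})$ makes sense.
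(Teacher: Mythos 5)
Your proposal is correct and follows essentially the same route the paper intends: the compactness of $\bigcup_{N}S^{\Delta,N}$ from Lemma~\ref{lemma4.4} plus the identification of weak limits as invariant measures of $\psi_m^{\Delta}$ from Lemma~\ref{lemma4.5}, assembled via the same contradiction argument as part (ii) of the proof of Theorem~\ref{theorem3.2}. Your closing remark about extending measures supported on $\mathbb{R}^{2N_n+1}\times\mathbb{R}^{2N_n+1}$ to $X$ by zero-padding is a sensible precaution consistent with the paper's conventions in Section~\ref{sec:4}.
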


Based on Theorem \ref{theorem3.2} and Theorem \ref{theorem4.2}, the following result describes the convergence of the numerical invariant measure set $S^{\Delta,N}$ to the invariant measures set $S^{0}$ of the random lattice system.

\begin{theorem}\label{theorem4.3} Suppose \eqref{5-10}, \eqref{5-11} and \eqref{5-29} hold. Then
\begin{equation*}
\lim_{\Delta\rightarrow 0}\lim_{N\rightarrow \infty} d_{P(X)} \left(S^{\Delta,N},S^{0}\right) = 0.
\end{equation*}
\end{theorem}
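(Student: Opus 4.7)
The plan is to derive Theorem 4.3 as an essentially immediate consequence of Theorems 3.2 and 4.2 by inserting the set $S^{\Delta}$ as an intermediate point and applying the triangle inequality for $d_{P(X)}$. Since $(P(X), d_{P(X)})$ is a Polish space, the pointwise triangle inequality for $d_{P(X)}$ on $P(X)$ lifts to the one-sided Hausdorff distance between subsets, which is the convention used throughout the excerpt for expressions such as $d_{P(X)}(S^{\Delta,N}, S^{0})$. Specifically, for any $a \in A$ and $b \in B$, $\inf_{c \in C} d_{P(X)}(a, c) \leq d_{P(X)}(a, b) + \inf_{c \in C} d_{P(X)}(b, c) \leq d_{P(X)}(a, b) + d_{P(X)}(B, C)$; taking the infimum over $b$ and then the supremum over $a$ yields the set-valued triangle inequality $d_{P(X)}(A, C) \leq d_{P(X)}(A, B) + d_{P(X)}(B, C)$.

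Applying this with $A = S^{\Delta, N}$, $B = S^{\Delta}$, $C = S^{0}$, I obtain
\begin{equation*}
d_{P(X)}(S^{\Delta,N}, S^{0}) \leq d_{P(X)}(S^{\Delta,N}, S^{\Delta}) + d_{P(X)}(S^{\Delta}, S^{0}).
\end{equation*}
For each fixed $\Delta \in (0, \Delta^{*})$, Theorem 4.2 gives $\lim_{N\rightarrow \infty} d_{P(X)}(S^{\Delta,N}, S^{\Delta}) = 0$; passing to the limit $N \rightarrow \infty$ in the displayed inequality therefore leaves
\begin{equation*}
\lim_{N\rightarrow \infty} d_{P(X)}(S^{\Delta,N}, S^{0}) \leq d_{P(X)}(S^{\Delta}, S^{0}).
\end{equation*}
Finally, sending $\Delta \rightarrow 0$ and invoking Theorem 3.2, the right hand side tends to zero, which yields the iterated limit stated in Theorem 4.3. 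The nonemptiness of $S^{\Delta,N}$, $S^{\Delta}$, and $S^{0}$ required for the set distances to be well defined is supplied by Theorems 4.1, 2.1, and \cite{Wang-2024}, respectively.

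Since Theorems 3.2 and 4.2 carry the substantive convergence content (tightness of $\bigcup_{\Delta\in(0,\Delta^{*})} S^{\Delta}$ and $\bigcup_{N\in\mathbb{N}} S^{\Delta,N}$, the Feller property, and the path-convergence estimate of Lemma 4.3), the only remaining task is the bookkeeping of the double limit. The order $\Delta \rightarrow 0$ after $N \rightarrow \infty$ is essential here, because in the intermediate bound $d_{P(X)}(S^{\Delta,N}, S^{\Delta})$ the reference set $S^{\Delta}$ itself depends on $\Delta$, and Theorem 4.2 is stated only for fixed $\Delta$. Consequently I do not expect any genuine analytical obstacle in this proof; its role is purely to package the two preceding upper semi-continuous convergence results into a single statement linking the finite dimensional BEM discretization directly to the invariant measures of the continuous random lattice reversible Selkov system \eqref{equ:1}.
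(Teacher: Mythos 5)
Your proof is correct and follows exactly the route the paper intends: the paper offers no explicit argument for Theorem 4.3 beyond citing Theorems 3.2 and 4.2, and your triangle-inequality decomposition through the intermediate set $S^{\Delta}$, with the iterated limit taken in the stated order, is precisely that packaging made explicit.
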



\end{document}